\newtheorem{theorem}{Theorem}[section]
\newtheorem{lemma}[theorem]{Lemma}
\newtheorem{proposition}[theorem]{Proposition}
\newtheorem{definition}[theorem]{Definition}
\newtheorem{remark}[theorem]{Remark}
\numberwithin{equation}{section}
\begin{document}   
%%%\date{July,7,2017}
%\keywords{semigroup,ring}
%\subjclass{16S99}
%\setcounter{page}{1}
%%%\title{The Deformation of Normed Spaces}   
\title{Rings and Fields from Semigroups}   
%\author[VOLKER W. TH\"UREY]{VOLKER W. TH\"UREY$^{1}$}  %%% Author for 'Annals of Functional Analysis'  
\author{VOLKER W. TH\"UREY        %     \quad and \quad   MALTE  von ARNIM 
     \\ Bremen,  Germany    \thanks{ 49 (0)421  591777, volker@thuerey.de     }   }
 \maketitle
%  \address{$^{1}$ Rheinstr. 91, 28199 Bremen, Germany. \ T: 49(0)421/591777} 
 \begin{abstract}   
     We introduce a ring and a field, generated by a semigroup, and we investigate some of their properties. 
  %   Further we present some properties of the generalized product.    
 \end{abstract}
     {\textit {Keywords and phrases}}:  semigroup, ring, field \\ 
   %     \centerline{MSC-2010: 16S99}   \\
     { \textit {AMS 2010 subject classification}}:  16S99, 13A99  \\ 
   %     AMS 2010 subject classification:  16S99, 13A99  \\ 
   %  {\textit {Mathematical Subject Classification}}: 46B20, 46C05, 30E99   
 %%%%%%%%%%%%%%%%%%%%%%%%%%%%%%%%%%%%%%%%%%%%%%%%%%%%%%%%%%%%%%%%%%%%%%%%%%%%%%%%%%%%%%%%%%%%%%%%%%%%  
 % \tableofcontents     
 %%%%%%%%%%%%%%%%%%%%%%%%%%%%%%%%%%%%%%%%%%%%%%%%%%%%%%%%%%%%%%%%%%%%%%%%%%%%%%%%%%%%%%%%%%%%%%%%%%%%   
 %% 	    \section{Introduction}      \ \ \  \ \ \ \   
 %% 	 	                  
   \section{  }      
    
  	 We start with a semigroup $ ( \mathbb H, \cdot) $, where `$ \cdot $'  means the operation % multiplication 
  	 in the semigroup $ \mathbb H $.
  \begin{definition}   \label{die erste definition}    
     Let \ $ ( \mathbb H,  \cdot ) $ \ be a semigroup, \,  $\mathbb H \neq \emptyset $.  
     We define the set \  $ sums({\mathbb H}) $.  
        \begin{align*} %  \left\{ \,
         sums({\mathbb H})  :=
    & \,  \{ \left\langle  \pm_1 x_1 \pm_2 x_2 \pm_3 x_3 \, .... \, \pm_{N-1}  x_{N-1} \pm_N x_N \, \right\rangle \              \vert \ N \in  {\mathbb N_0}, \ \pm_i \in \{ +,- \} \text{ (the sum is only formal)}, \  \\
    &     \text{ this means that the symbol} \ \pm_i \ \text{ \textrm  either is the sign `+' }, \, 
          \text{\textrm or } \, \pm_i  \ \text{ is the sign  `$-$'},                                  \\ 
  % &    \text{ It holds that } \ %   \right\}  
    &     x_1, x_2, x_3, \, \ldots \, , \, x_{N-1}, x_N   \ \  
          \text{ are elements (not necessarily distinct) of the semigroup } {\mathbb H} \ \} \ .    \\
  %  &     \text{\textrm Due to agreement \eqref{permutation} we fix the statement that it contains no pair} \
  %        \{ +a, -a \} \, , a \in  \mathbb H  \}  \ \biguplus  \ \{ 0_{\mathbf{R}(\mathbb H)} \}.   
      \end{align*}
  \end{definition}    
            The set $ sums ({\mathbb H}) $ contains at least three elements
            $ \left\langle \right\rangle, \left\langle +h\right\rangle, \left\langle -h\right\rangle 
            \, \, \text{for} \  h \in \mathbb H $.  \
 %%                             
 %%   Now we generate a equivalence relation on the just defined set $ sums({\mathbb H}) $ before we define a ring 
 %%    and a field. 
 %%     Let \  $ x = \left\langle  \sum_{l=1}^{N} \pm_{l} \, x_{l} \, \right\rangle 
 %%     \ , \ y \ $  be two arbitrary elements of  $ sums({\mathbb H})$.
 %%  
   It even has infinite many elements, it contains the subset 
   $ \{ \left\langle +h\right\rangle, \left\langle +h +h \right\rangle , 
                         \left\langle +h +h +h \right\rangle   , \, \ldots  \  \}   $.  
                 
   We call the number $ N  $ the { \textit {length}} of $ x $, and we call the symbols 
      $ \pm_{l} \, x_{l} $ the { \textit {entries }} of $x$, for
      $x =  \left\langle  \pm_1 x_1 \pm_2 x_2 \pm_3 x_3 \, \ldots \, \pm_{N-1}  x_{N-1} \pm_N x_N \, \right\rangle 
      \   \in sums ({\mathbb H}) $.       
    %%%%%%%%%%%%%%%%%%%%%%%%%%%%%%%%%%%%%%%%%%%%%%%% 
                                       
      We define an equivalence relation on  $ sums ({\mathbb H}) $. Let $ x,y $ be elements of 
      $   sums({\mathbb H})  $.  We introduce the relation `$\approx_1$' to create a zero in the ring 
      $ {\mathbf{R}( \mathbb H)} $ we will construct below.                      \\
      We say that the elements $x $  and $ y $ are equivalent if \  $ x $ has length $ N $, $ y $ has length 
      $ N - 2 $, and $y$ has the same entries as $x$ at the same positions, except $ y $ lacks 
      a pair of entries $ \{+h, -h \} , h \in {\mathbb H} $. \ In symbols we write $ x \approx_1 y $. We get  \
                     $ \left\langle +h -h \right\rangle \approx_1 \left\langle \right\rangle $ and
                     $ \left\langle -h +h \right\rangle \approx_1 \left\langle \right\rangle $
      for each $ h \in {\mathbb H} $.   
      
 We add a further relation `$\approx_2$' to enforce the commutativity of the addition 
      `$ +_{\mathbb H}$' in the ring $ {\mathbf{R}(\mathbb H}) $.   
  
  %    We enforce the commutativity of the pair   $ ({\mathbf{R}( \mathbb H}), +_{\mathbb H} ) $ by the defined law 
  %    for all \ $ x \in sums({\mathbb H}) $.  
  We say that two elements $ x $ and $ z $ fulfill the relation `$\approx_2$' if 
      \begin{align} \label{permutation}
      x \, = \,  \left\langle  \sum_{i=1}^{N} \pm_{i} \, x_i \, \right\rangle \ \text{ and }\ 
          z :=   \left\langle  \sum_{i=1}^{N} \pm_{\nu(i)} \, x_{\nu(i)} \, \right\rangle \ \text{where} \ 
             \ \nu \  \text{ is any permutation of the set } \ \{ 1, 2, \ldots , N \} \ . 
      \end{align}       
  Let `$ \cong $' be the smallest equivalence relation that includes the relations `$\approx_1$' and `$\approx_2$'. 
      We name the equivalent classes by $[ . ]_{/ \cong} $, i.e. here we get $[ x ]_{/ \cong} =  [ y ]_{/ \cong}$ and 
      $[ x ]_{/ \cong} =  [ z ]_{/ \cong}$. \  
      We get a set of equivalent classes \ $sums({\mathbb H)}_{/ \cong}$.  
      Every equivalent class  $ [ x ]_{/ \cong} $ has an infinite number of elements. 
      Every equivalent class  contains an element $ p $ of minimal length, this means that  $ p $ contains no pair
      $ \{ +h, -h \} $ of entries. 
      In the case of an element  $[ x ]_{/ \cong} \in  sums({\mathbb H)}_{/ \cong} $  with an element $ p $
      of minimal length $ 0 , \, \text{ i.e. \ } p = \left\langle \right\rangle \in  sums({\mathbb H)} $,   
      we will write  $ [p]_{/ \cong} =: 0_{\mathbf{R}(\mathbb H)} $.         \\                                                                                         
   In the following we will not distinguish between the equivalent class $ [ x ]_{/ \cong} $ and its
      representative $ x $.  This can be made since all the operations which are constructed in the following are
      independent  from the picked representatives of the equivalent classes. 
                                                 
   We define the sets \  ${\mathbf{R}( \mathbb H)}$ and $ {\mathrm{Quot}} $.  %  ${\mathbf{F}( \mathbb H)}$. 
 % $ {\mathrm{Quot}( \mathbb H)} $. 
 %   ${\mathbf{F}( \mathbb H)}$. 
 %     and we add a neutral element $  0_{\mathbf{R}(\mathbb H)} $. and $ 1_{{\mathbf{R}(\mathbb H)}} $. % where 
 %  \newpage   
   \begin{definition}   \label{die zweite definition}    
 %   Let \ $ ( \mathbb H,  \cdot ) $ \ be a semigroup, \,  $\mathbb H \neq \emptyset $.  
 %       \\           \centerline{  }
 %    \begin{align} %  \left\{ \,
 %          {\mathbf{R}(\mathbb H)} \ := \ sums({\mathbb H)} \ \biguplus  \ \{ 0_{\mathbf{R}(\mathbb H)} \} .   
 %     \end{align}
 % \end{definition}       
 %       {\mathbf{R}(\mathbb H)} \
 %   \end{definition}    
     $ {\mathbf{R}(\mathbb H)} \ := \ sums({\mathbb H)}_{/ \cong} \\% \ \bigcup  \ \{ 0_{\mathbf{R}(\mathbb H)} \} \\
 %    \ \mathrm{ and } \ {\mathbf{R}(\mathbb H)}_1 := {\mathbf{R}(\mathbb H)} \cup \{1_{{\mathbf{R}(\mathbb H)}}\}. \\ 
 %  \end{definition} 
   %          We define \, $ \left\langle \right\rangle =:  0_{\mathbf{R}(\mathbb H)} $.
   %  , two are $ \left\langle +a\right\rangle , \left\langle -a\right\rangle $, \, for \, $ a \in \mathbb H $. \
   %     , two are $ [\left\langle +a\right\rangle]_{/ \cong} , 
   %         [\left\langle -a\right\rangle]_{/ \cong}  $, \,
   %         for \, $ a \in \mathbb H $. \   We define the set of quotients
   %     \begin{definition}   \label{die dritte definition} 
         {\mathrm{Quot}} \ := \  \left\{ \, p / q  \ \vert \ p,q \in  {\mathbf{R}(\mathbb H)} 
            \ | \ \ \ q \neq  0_{\mathbf{R}(\mathbb H)}                                                   \right\} $
   %      {\mathbf{F}( \mathbb H)} \ := \  \left\{ \, p / q  \ \vert \ p,q \in  {\mathbf{R}(\mathbb H)}  % \right\}
   %     \ \mathrm {and} \ q \neq  0_{\mathbf{R}(\mathbb H)}   \right\} \ \bigcup  
   %     \ \left\{ 0_{\mathbf{R}(\mathbb H) } / q \ \vert \ q \neq  0_{\mathbf{R}(\mathbb H) }   \right\}     $ 
   %      _{/ \cong}  
   %      \ \{ 0_{\mathbf{R}(\mathbb H)} / q \ \vert \ q \neq  0_{\mathbf{R}(\mathbb H)}   \}     $
   %      \ \bigcup \ \{ 1_{\mathbf{F}(\mathbb H)} \} $     
        \end{definition}  
               %%%%%%%%%%%%%%%%%%%%
       We have that $ {\mathbf{R}( \mathbb H)} $ contains at least some elements, one is 
       $ 0_{\mathbf{R}(\mathbb H)} $. \\       
       Note that the quotients in $ {\mathrm{Quot}}$  are only formal. 
   %        and they are independent from their representatives of the numerator and the denominator.
       We will write $ 0_{\mathrm{Quot} } $ instead of  $ 0_{\mathbf{R}(\mathbb H)} / q $ \
       for all $ q \in sums({\mathbb H}), q \neq 0_{\mathbf{R}(\mathbb H)} $. Note that we will construct a 
       field $ { \mathbf{F}( \mathbb H)} $, and we assume that in this case the semigroup  $ ( \mathbb H, \cdot) $  
       is commutative.  \\
       We find informations on rings, fields and semigroups in \cite{Bosch} and \cite{Hungerford}.   \\
   %    Note that the sign  `$ :=: $' in the following formulas means `equality by definition'. 
   %          $  {  }  \hfill  \Box  $ 
  %%%%%%%%%%%%%%%%%%%%%%%%%%%%%%%%%%%%%%%%%%%%%%%%%%%%%%%%%%%%%%%%%%%%%%%%%%%%
   \begin{proposition}   \label{die erste proposition}
             The tripel \, $ ( {\mathbf{R}(\mathbb H)} , +_{\mathbb H}, *) $ \, is a ring, where 
    %      $ ({\mathbf{F}( \mathbb H)}, +_{_F}, \cdot_{_F} ) $ \ is a field, where  \
             $ +_{\mathbb H}, * :    {\mathbf{R}(\mathbb H)} \times {\mathbf{R}(\mathbb H)} \longrightarrow  
            {\mathbf{R}(\mathbb H)} $. 
    %        \ \text{ and} \ +_{_F}, \cdot_{_F} :  {\mathbf{F}( \mathbb H)} \times  {\mathbf{F}( \mathbb H)}               %         \longrightarrow  {\mathbf{F}( \mathbb H)} , \ respectively. $               
   \end{proposition}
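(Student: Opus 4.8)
The plan is to make the two operations explicit and then check the ring axioms one by one. Addition $+_{\mathbb H}$ is the concatenation of formal sums,
\[
\left\langle \sum_{i=1}^{N}\pm_i x_i\right\rangle \, +_{\mathbb H} \, \left\langle \sum_{j=1}^{M}\pm'_j y_j\right\rangle \ := \ \left\langle \sum_{i=1}^{N}\pm_i x_i \, + \, \sum_{j=1}^{M}\pm'_j y_j\right\rangle ,
\]
and multiplication $*$ is the ``bilinear'' extension of the semigroup product,
\[
\left\langle \sum_{i=1}^{N}\pm_i x_i\right\rangle * \left\langle \sum_{j=1}^{M}\pm'_j y_j\right\rangle \ := \ \left\langle \sum_{i=1}^{N}\sum_{j=1}^{M} \varepsilon_{ij}\,(x_i\cdot y_j)\right\rangle ,
\]
where $\varepsilon_{ij}$ is the sign ``$+$'' if $\pm_i=\pm'_j$ and ``$-$'' otherwise (the usual rule of signs), $x_i\cdot y_j$ is the product in $\mathbb H$, and the double sum is listed in some fixed order. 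One does not claim a multiplicative unit or commutativity of $*$, so only the abelian-group axioms for $+_{\mathbb H}$, associativity of $*$, and the two distributive laws have to be verified. The first task, however, is to show that $+_{\mathbb H}$ and $*$ respect $\cong$ and hence descend to $\mathbf{R}(\mathbb H)$; since $\cong$ is the smallest equivalence relation containing $\approx_1$ and $\approx_2$, it suffices to test compatibility with each of these two generating moves performed on one argument at a time.

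For $+_{\mathbb H}$ this is immediate: adjoining a block turns a permutation of the entries of one summand into a permutation of the entries of the concatenation (an $\approx_2$-move), and a cancelling pair $\{+h,-h\}$ inside one summand is still a cancelling pair inside the concatenation (an $\approx_1$-move). For $*$, a permutation of the entries of the left factor permutes the ``$x$-rows'' of the double sum, hence permutes the $N\cdot M$ entries of the product, which is again $\approx_2$; and deleting a pair $\{+h,-h\}$ from the left factor removes from the product, for each entry $\pm'_j y_j$ of the right factor, the cancelling pair $\{+(h\cdot y_j),-(h\cdot y_j)\}$ (with its two signs possibly interchanged, which does not matter), i.e.\ $M$ applications of $\approx_1$. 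The right-hand arguments are handled symmetrically, and chaining the two one-sided statements yields full well-definedness; by the remark already made in the paper we may then work with representatives.

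Next I would check that $(\mathbf{R}(\mathbb H),+_{\mathbb H})$ is an abelian group. Associativity of concatenation is clear, and $0_{\mathbf{R}(\mathbb H)}=[\left\langle\right\rangle]_{/\cong}$ is a two-sided neutral element because concatenating with the empty sum changes nothing. Commutativity holds since $x+_{\mathbb H}y$ and $y+_{\mathbb H}x$ are related by the block-transposing permutation, an $\approx_2$-move. For additive inverses one takes $-x:=\left\langle \sum_{i=1}^{N}(\mp_i)\,x_i\right\rangle$, the formal sum obtained from $x$ by reversing every sign; then in $x+_{\mathbb H}(-x)$ each entry $+x_i$ and the corresponding $-x_i$ form a pair that can be removed by $\approx_1$, so after $N$ such removals one is left with $\left\langle\right\rangle=0_{\mathbf{R}(\mathbb H)}$.

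Finally come associativity of $*$ and distributivity. Both $(x*y)*z$ and $x*(y*z)$ have, up to a permutation of entries (an $\approx_2$-move), the same entry list $\varepsilon_{ijk}\,(x_i\cdot y_j\cdot z_k)$, where the unambiguous triple product uses the associativity of $\mathbb H$ and $\varepsilon_{ijk}$ is fixed by the rule of signs; hence $*$ is associative. The distributive laws are essentially built into the definitions: the entry list of $x*(y+_{\mathbb H}z)$ is the concatenation of those of $x*y$ and $x*z$, and symmetrically on the other side. I expect the only genuinely delicate point to be the well-definedness of $*$ under $\approx_1$ — following how a single cancellation in one factor propagates to several cancellations in the product — together with the small bookkeeping fact that $0_{\mathbf{R}(\mathbb H)}$ is absorbing (an empty index set produces the empty formal sum); everything else reduces to the associativity of the semigroup operation and the elementary rule of signs.
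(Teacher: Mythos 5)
Your proposal is correct and takes essentially the same route as the paper: the same formal-sum operations with the rule of signs, the abelian-group axioms obtained from $\approx_1$ (cancellation of $\{+h,-h\}$ pairs against the sign-flipped element) and $\approx_2$ (permutation, giving commutativity), associativity of $*$ reduced to associativity of $(\mathbb H,\cdot)$, and distributivity by comparing entry lists, with your concatenation-style addition agreeing with the paper's interleaved sum modulo $\approx_2$. The only point where you go beyond the paper is the explicit verification that $+_{\mathbb H}$ and $*$ respect the generating relations and hence descend to $\mathbf{R}(\mathbb H)$, which the paper merely asserts; this is a welcome tightening of the same argument rather than a different approach.
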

  %%%%%%%%%%%%%%%%%%%%%%%%%%%%%%%%%%%%%%%%%%%%%%%%%%%%%%%%%%%%%%%%%%%%%%%%%%%%%
   Furthermore, we will define two operations \ $ +_{\mathrm{Quot}}, \cdot_{\mathrm{Quot}}: 
   {\mathrm{Quot}} \times {\mathrm{Quot}} \longrightarrow {\mathrm{Quot}} $. 
   \ Note that we omit mostly the symbol `$ [ . ]_{/ \cong} $' of equivalent
   classes in the following. 
  \begin{proof}                                                             
   We  define \ $ x *  0_{\mathbf{R}(\mathbb H)} :=  0_{\mathbf{R}(\mathbb H)} =: 0_{\mathbf{R}(\mathbb H)} * x $
   for each $  x \in \mathbf{R}(\mathbb H) $. \
       For positive natural numbers   $ N, K \text{ for }  \  %    \\ 
       x = \left\langle  \sum_{i=1}^{N} \pm_{i,1} \, x_{i} \, \right\rangle \ , \ 
       y =  \left\langle  \sum_{j=1}^{K} \pm_{j,2} \, y_j \, \right\rangle ,
       \ x , y , p , q , r , s \ \in  sums({\mathbb H)} $
       \ we define the multiplications for  \ $ q,s \neq 0_{\mathbf{R}(\mathbb H)} $   
     \begin{align*}  %     \label{naechste wichtige Definition}
         \frac{p}{q} \cdot_{ {\mathrm{Quot}} } \frac{r}{s} \ := \ \frac{p * r}{q * s } \ \ \text{ in } \ 
        {\mathrm{Quot}} \ , \ \ \text{ while in } \ {\mathbf{R}( \mathbb H)} \text{ we multiply } \
         (x, y \text{ as above }):   
      \end{align*} 
      \begin{align}  \label{noch eine  wichtige Definition}  
           x * y   %    \ := \ \left\langle  \sum_{i=1}^{N} \pm_i x_i \right\rangle * y
      %     \ & \ = \ \left\langle  \sum_{i=1}^{N} \pm_{i,1} \, x_{i} \, \right\rangle            \ * \
      %              \left\langle  \sum_{j=1}^{K} \pm_{j,2} \, y_j \, \right\rangle      \
          \ \ := \ \left[ \left\langle  \sum_{i=1}^{N} \sum_{j=1}^{K} \pm_{i,j,*} \ x_i \cdot y_j \, \right\rangle 
          \right]_{/ \cong} 
      %      \ & \ := \ \left\langle  \sum_{i=1}^{N} \sum_{j=1}^{K} \pm_{i,j,*} \ x_i \cdot y_j \, \right\rangle 
      %      \left\langle  \sum_{i=1}^{N} \sum_{j=1}^{K} \left\langle \pm_{i,1} \, x_i \right\rangle 
      %           \ * \  \left\langle \pm_{j,2} \, y_j \right\rangle \, \right\rangle 
      %         \ := \ \left\langle  \sum_{i=1}^{N} \sum_{j=1}^{K} \pm_{i,j,*} \ x_i \cdot y_j \, \right\rangle 
      \end{align}  
           where in the last term the signs \ $ \pm_{i,j,*} \in \{ +,- \}  $
           are determined by the rules \ $\pm_{i,j,*} :=  + $ if $\pm_{i,1}\pm_{j,2} = ++ $ or  
           $\pm_{i,1}\pm_{j,2} = -- $ \: and \:  $\pm_{i,j,*} :=  - $ if $\pm_{i,1}\pm_{j,2} = +- $ or 
           $\pm_{i,1}\pm_{j,2} = -+ $. \ The summation in the last term is made by a formal sum 
          of multiplications in the semigroup $ ( \mathbb H, \cdot) $, provided with signs \ $\pm_{i,j,*}$.  
      %    If $ x $ has length $ N $ and  $ y $ has length $ K $, then $ x * y $ has length $ N \cdot K $.  
      % \begin{proof}    
  %%%%%%%%%%%%%%%%%%%%%%%%%%%%%%%%%%%%%%%%%%%%%%%%%%%%%%%%%%%%%%%%%%%%%%%%%%%%%%%          
    \begin{remark}       \label{erste Bemerkung}     \textrm
         The associativity of $({\mathbf{R}( \mathbb H)}, * )$ relies on the associativity 
         of the semigroup   $ ( \mathbb H, \cdot) $.  \ The construct $ ( {\mathbf{R}(\mathbb H)} ,  *)$
         is commutative if and only if the generating semigroup $ ( {\mathbb H} , \cdot )$  is commutative. 
    \end{remark}     
 %%      It is easy to show that 
 %%      \newpage 
 %%%%%%%%%%%%%%%%%%%%%%%%%%%%%%%%%%%%%%%%%%%%%%%%%%%%%%%%%%%%%%%%%%%%%%%%%%%%%%%%%%%%%%%%%%%%%%%%%%%%%%
      For length $ N = 0 $ we have the empty sum \,   
      $ \left\langle \right\rangle :=  \left\langle  \sum_{i=1}^{0} \pm_{i,1} \, x_{i} \, \right\rangle $.
      For positive natural numbers   $ N, K $ we define the sum \   % {\mathbf{R}( \mathbb H)}  $ \ the sum 
   \begin{align}    \label{wichtige Definition}   
       x +_{\mathbb H} \, y \ := \  
       \left[ \left\langle  \sum_{l=1}^{\max\{N, K \}} \pm_{l,1} \, x_l \pm_{l,2} \, y_l \, \right\rangle 
        \right]_{/ \cong} 
       \  \ \text{ where}
    %    := \         \ \ \text{ where } 
    %   \left\langle  \sum_{l=1}^{\max\{N, K \}} \pm_{l,2} \, y_l \pm_{l,1} \, x_l \, \right\rangle =: 
    %   y +_{\mathbb H} \, x \ ,  
    %  
   \end{align}      
   \begin{align}  \label {N ungleich K}
      &   \text{ if} \quad K < N  \  \text{ we define \ } \  \pm_{l,1} \, x_l \pm_{l,2} \, y_l := 
         \pm_{l,1} \ x_{l} \, \ \, \text{ for } K < l \leq N ,  \\  % =: \, \pm_{l,2} \, y_l \pm_{l,1} \, x_l   \  
         %         \text{ for } K < l \leq N ,  \\
      &   \text{ if} \quad N < K  \   \text{ we define \ } \ \pm_{l,1} \, x_l \pm_{l,2} \, y_l := \, \pm_{l,2} \ y_l \ 
                                                      \       \text{ for } N < l \leq K . 
    %      \pm_{l,2} \ y_l \ =: \ \pm_{l,2} \, y_l \pm_{l,1} \, x_l \   \text{ for } N < l \leq K . 
   \end{align} 
    Hence, with relation `$\approx_2$' the addition `$ +_{\mathbb H} $' in
    $ ({\mathbf{R}( \mathbb H)}, +_{\mathbb H}) $ is commutative. \
   %           \label{no permutation}
      For \ $ x = \left\langle  \sum_{i=1}^{N} \pm_{i} \, x_{i} \, \right\rangle $ \  we define \ 
      $ -x :=  \left\langle  \sum_{i=1}^{N} \mp_{i} \, x_{i} \, \right\rangle  $, \; where if \
      $ \pm_{i} \ \; \text{ is } $ `$ + $' \; $ \text{ then } \mp_{i} \ \text{ is } $ `$- $', $ \; 
      \text{ and vice versa}$. \ By the relation `$\approx_1$' %      and `$\approx_2$' 
      we get \ $ \left[ x \right]_{/ \cong} +_{\mathbb H} \left[-x\right]_{/ \cong} = 0_{\mathbf{R}(\mathbb H)} $. \\
  %    We enforce the commutativity of the pair   $ ({\mathbf{R}( \mathbb H)}, +_{\mathbb H} ) $ by the defined law 
  %    for all \ $ x \in sums({\mathbb H)} $.
  %    \begin{align} \label{permutation}
  %     x \, = \,  \left\langle  \sum_{i=1}^{N} \pm_{i} \, x_i \, \right\rangle \ := \ 
  %           \left\langle  \sum_{i=1}^{N} \pm_{\nu(i)} \, x_{\nu(i)} \, \right\rangle \ \text{where} \ 
  %           \ \nu \  \text{ is any permutation of the set } \ \{ 1, 2, ... , N \} \ . 
  %    \end{align}     
         It holds for 
       $ x,y,z \in {\mathbf{R}( \mathbb H)}:  $                                    \\
         \centerline{$ \ (x +_{\mathbb H} \, y ) +_{\mathbb H} \, z = x +_{\mathbb H} \, (y +_{\mathbb H} \, z)$
          \ and \ $ (x * y) * z = x * (y * z) $.}       \\ 
          We have the associativity for both operations. The pair $ (\mathbf{R}{( \mathbb H)}, +_{\mathbb H} ) $ 
          is an Abelian group.                                                     \\
  %%%%%%%%%%%%%%%%%%                 
       We are defining two operations \ $ +_{\mathrm{Quot}}, \cdot_{\mathrm{Quot}} : \ 
         {\mathrm{Quot}} \times  {\mathrm{Quot}} \longrightarrow  {\mathrm{Quot}} . \ 
         \text{ For} \ p, q, x, y \in {\mathbf{R}( \mathbb H)} \ \text{ and } \ 
         q, y \neq  0_{{\mathbf{R}(\mathbb H)}} \ 
         \text{ we define the addition }  $ `$ +_{ {\mathrm{Quot}}}$'  $ \text{ in } \  {\mathrm{Quot}}  $, 
  \begin{align}    \label{Addition}       
  %  \frac{ p }{ q } +_{\mathrm{Quot} } \, \frac { x }{ y } \: := \: 
    \frac{ p }{ q }  +_{ {\mathrm{Quot}}}  \frac { x }{ y } \: := \: 
    \frac{ p * y +_{\mathbb H} \, q * x }{ q * y } \ \ .  
  \end{align} 
  %  \frac{ p }{ q } +_{\mathrm{Quot} } \, \frac { x }{ q } \: := \: 
   It holds for $ a,b,c \in  {\mathrm{Quot}} :  $                                                           \\
         \centerline{$ \ (a +_{ {\mathrm{Quot}}} b )  +_{ {\mathrm{Quot}}} c = a  +_{ {\mathrm{Quot}}} 
         (b +_{ {\mathrm{Quot}}} c)$ \ and \ $ (a  \cdot_{ {\mathrm{Quot}}} b) \cdot_{ {\mathrm{Quot}}} c 
                                      = a \cdot_{ {\mathrm{Quot}}} (b \cdot_{ {\mathrm{Quot}}} c) $. }       \\
       We have the associativity for both operations.                                                        \\
       In the case we can construct a field  $ \mathbf{F}{( \mathbb H)} $, the pair 
       $ (\mathbf{F}{( \mathbb H)},  +_{_F}) $ is an Abelian group, hence the semigroup  \ 
       $ ( \mathbb H,  \cdot ) $ \ and also the ring \, 
       $ ( {\mathbf{R}(\mathbb H)} , +_{\mathbb H}, *) $ \, have to be commutative.                         \\
  %    The inverse of $ x / y $ in $ ( {\mathrm{Quot}},+_{ {\mathrm{Quot}}}) $ is $ -x / y $, of course. \\    
       
   %  \begin{remark}   \label{Produkt im Koerper}      \textrm
   %    Note that the formla on the right-hand side includes the formula on the left. Further note that
   %    both formulas do not contradict in the field $ \mathbf{F}(\mathbb H) $ in  the case $ q = y $, 
   %    since we can cancel common factors by \eqref{Definition der Relation in Quot(H) }. 
   %     This field will be constructed soon.
   %  \end{remark}      
 % \begin{lemma}
 %        The two above formulas do not contradict in the field $ \mathbf{F}(\mathbb H) $ in  the case $ q = y $.
 %  \end{lemma}    
 %  \begin{proof}
 %   by the rule \eqref{eine weitere wichtige Definition}. 
 %     We start with $ \frac{ p }{ q }  +_{_F} \frac { x }{ q } $, \: 
 %      % we use both formulas \eqref{Addition},  
 %      we use  the formula on the right-hand side from \eqref{Addition} then the other to get two summands,
 %      the multiplication  ` $ * $'  %   in the field $ \mathbf{F}(\mathbb H) $ 
 %      is commutative if we can construct a field $ \mathbf{F}(\mathbb H) $, and we use 
 %      \eqref{Definition der Relation in Quot(H) } and the distributivity, and eventually we obtain 
 %      $ \frac{ p }{ q }   +_{_F}  \frac { x }{ q } \: = \: \frac{ p +_{\mathbb H} \,  x }{ q } $.  
 % \end{proof}
  % ,  and if we assume the left distributivity \eqref{Links Distributivitaet}.          
   Furthermore we have inverse elements  and two neutral elements \ 
         $ 0_{\mathbf{R}(\mathbb H)} \text { and } 0_{ {\mathrm{Quot}}} $ \ in the pairs 
         $ ({\mathbf{R}(\mathbb H)},+_{\mathbb H}) \ \text { or } \ ( {\mathrm{Quot}},+_{ {\mathrm{Quot}}}), 
         \text { respectively, and we define the neutral element } 
         1_{ {\mathrm{Quot}}} $. % and $ 1_{\mathbf{F}(\mathbb H)} $. 
         Note that we will omit the symbol `$ [ . ]_{/ \cong} $' %   and `$ [ . ]_{/ \cong3} $' 
         of equivalent classes.
   %     \ by the rules  
   %      \text{ and different elements } \ $0_{\mathbf{R}(\mathbb H)}, 0_{\mathbf{F}(\mathbb H)}$, where \,
   %   \ $ 0_{\mathbf{R}(\mathbb H)}, 0_{\mathbf{F}(\mathbb H)} \notin \mathbb H $
   %  $ 0_{\mathbf{R}(\mathbb H)} \in  \mathbf{R}(\mathbb H), 0_{\mathbf{F}(\mathbb H)} \in \mathbf{F}(\mathbb H) $       %     by the rules 
   %     \;  p \neq 0_{\mathbf{R}(\mathbb H)}, \\  
    \begin{align}        
     &   x +_{\mathbb H} \ 0_{\mathbf{R}(\mathbb H)} = x  \ 
          \text { and } \ x +_{\mathbb H} \, -x =  0_{\mathbf{R}(\mathbb H)} \ , \                                \\
 %    &      x/y +_{\mathrm{Quot}} \ 0_{ {\mathrm{Quot}}} = x/y  \ , \ 
 %    &     (x/q) +_{ {\mathrm{Quot}}} \, (-x/q) := 0_{ {\mathrm{Quo\cdot_{ {\mathrm{Quot}}} t}}} \ , \  % := 0_{\mathbf{R}(\mathbb H)} /q , \
     &     1_{ {\mathrm{Quot}}} \cdot_{ {\mathrm{Quot}}}  (x/y)  \ := \ 
           (x/y) \cdot_{ {\mathrm{Quot}}}  1_{ {\mathrm{Quot}}} \ := \ x/y \ =: \ (x/y) \, / 1_{ {\mathrm{Quot}}} 
                                                                                          \ , \  \\
     &     (r/s) \cdot_{ {\mathrm{Quot}}}  (x/1_{ {\mathrm{Quot}}})  \ := \ \frac{r * x}{s}       
            \ =: \       (x/1_{ {\mathrm{Quot}}}) \, \cdot_{ {\mathrm{Quot}}} \ (r/s)     \ , \                   \\ 
 %   &     x := x / 1_{\mathrm{Quot}}  \ , \                                                        %    \\
 %    &     (x/y) \cdot_{ {\mathrm{Quot}}} 1_{\mathrm{Quot}} := x/y =:
 %           1_{\mathrm{Quot}} \cdot_{ {\mathrm{Quot}}} (x/y)                                   \ , \              \\
     &     (x/y) \cdot_{ {\mathrm{Quot}}} 0_{ {\mathrm{Quot}}}  \ := \ 0_{ {\mathrm{Quot}}} \ =: \ 
            0_{ {\mathrm{Quot}}} \cdot_{ {\mathrm{Quot}}}  (x/y) 
  % \ , \  \   \text{ and } \ (x/y) \cdot_{ {\mathrm{Quot}}} (y/x) \ = \ 1_{ {\mathrm{Quot}}}  \ . \           \\  
  %   &        (x/y) +_{_F}  1_{\mathbf{F}(\mathbb H)} = x/y +_{_F} p/p = x/y +_{_F} 
  %   &        \left\langle +h\right\rangle / \left\langle +h\right\rangle = (x  +_{\mathbb H} y) / y     
    \end{align}                                 
     %  for all \ $ x,y, p,q,r,s \in {\mathbf{R}(\mathbb H)} $, \, for \, $ x,y,q,s \neq 0_{\mathbf{R}(\mathbb H)} $.
       for all \ $ r,s,x,y \in {\mathbf{R}(\mathbb H)} $, \, for \, $ s,y \neq 0_{\mathbf{R}(\mathbb H)} $.
     We get that the pair  $ ({\mathbf{R}( \mathbb H)}, +_{\mathbb H}) $ is an Abelian group.   \\ 
   %     The inverse element of \ $p/q$ \ in $ ( {\mathrm{Quot}}, \cdot_{ {\mathrm{Quot}}} )$ is \ $q/p$ , 
   %    of course, for \ $p,q \neq   0_{\mathbf{R}(\mathbb H)}$.
   %        We will write \, $ 0_{\mathbf{R}(\mathbb H)} $ \, instead of  \, $ \left\langle \right\rangle \in
   %        \ sums({\mathbb H)} \, $.  \\
   %       The associativity of $({\mathbf{R}( \mathbb H)}, * )$ relies on the associativity 
   %      of the semigroup   $ ( \mathbb H, \cdot) $.                                             \\
   %      The construct $ ( {\mathbf{R}(\mathbb H)} ,  *)$    is commutative 
   %      if and only if the generating semigroup $ ( {\mathbb H} , \cdot )$     is commutative. 
   %                                                                                           
  %%%%%%%%%%%%%%%%%%%%%%%%%%%%%%%%%%%%%%%%%%%%%%%%%%%%%%%%%%%%%%%%%%%%%%%%%%%%%%%%%%%%%%%%%%%%
  %   Note that the addition `$ +_{\mathbb H}  $' and the multiplication `$ * $'  are well 
  %   defined on equivalent classes. This means that  for $x, y \in  sums({\mathbb H)}$ we get      
  %    $ [ x ]_{/ \cong} +_{\mathbb H} \, [ y ]_{/ \cong} = [ x +_{\mathbb H} \, y ]_{/ \cong} $ and
       %      $ [ x ]_{/ \cong} * [ y ]_{/ \cong} = [ x * y ]_{/ \cong} $.
  %   Further we have that if $ x,y,v,w \in  sums({\mathbb H)}$ and $ x \cong v $ and $ y \cong w $ then it follows 
  %   $  x +_{\mathbb H} \, y \cong v +_{\mathbb H} \, w $ and $ x * y \cong v * w $.    
  %      %%%%%%%%%%%%%%%%%%%%%%%%%%%%%%%%%%%%&%%%%%%%%%%%%%%%%%%%%%%%%%%%%%%%%%%%%%%%%
                                                  \\                     
    For the left distributivity in  \ $ ( {\mathbf{R}(\mathbb H)} , +_{\mathbb H}, *) $ \ we need to show 
      \begin{align}    \label{Links Distributivitaet} 
                   a * ( x +_{\mathbb H} \, y) = a * x +_{\mathbb H} \, a * y 
      \end{align}              
      We use the definition of the sum in line \eqref{wichtige Definition}. Additionally we define \
      $ a := \left\langle  \sum_{j=1}^{A} \pm_{j,3} \, a_{j} \, \right\rangle \ \in sums({\mathbb H)} $.
  %   Note that we omit the symbol `$ [ . ]_{/ \cong} $' of equivalent classes. 
      It holds   
   %   \nopagebreak \    It holds  
   \begin{align}
           a * ( x +_{\mathbb H} \,y)
     & \ = \ \left\langle  \sum_{j=1}^{A} \pm_{j,3} \, a_{j} \, \right\rangle \ * \ ( x +_{\mathbb H} \,y)      \\ 
     &  \ = \ \left\langle  \sum_{j=1}^{A} \pm_{j,3}  \, a_{j} \, \right\rangle \ * \ 
        \left\langle  \sum_{l=1}^{\max\{ N, K \}}  \pm_{l,1}  \, x_l \pm_{l,2}  \, y_l \, \right\rangle         \\
  %   &  \ := \ \left\langle  \sum_{j=1}^{A}  \sum_{l=1}^{\max\{ N, K \}} 
  %        \pm_{j,3} \, a_j \ \pm_{l,1} \, x_l  \ \ \ \pm_{j,3} \,  a_j \ \pm_{l,2} \, y_l  \ \right\rangle      \\     %  & := \ \left\langle  \sum_{j=1}^{A}  \sum_{l=1}^{\max\{ N, K \}} \pm_{j,l,4} \ 
  %                                          \ a_j \cdot x_l \cdot y_l  \, \right\rangle                        \
  %   
  %    \ = \ \left\langle  \sum_{j=1}^{A}  \sum_{l=1}^{\max\{ N, K \}} \left\langle \pm_{j,3}  \, a_j \right\rangle 
  %       *    \left\langle  \pm_{l,1} \, x_l \pm_{l,2} \, y_l \right\rangle \, \right\rangle                       \
  %     \ := \ \left\langle  \sum_{j=1}^{A}  \sum_{l=1}^{\max\{ N, K \}} \pm_{j,l,4} \ 
  %                                          \ a_j \cdot x_l \cdot y_l  \, \right\rangle                            \\
  %    \begin{align}
  %  &   \qquad \qquad  \qquad \qquad \qquad 
  %   &   \  = \ \left\langle  \sum_{j=1}^{A}  \sum_{l=1}^{ N } 
  %                        \pm_{j,3} \, a_j \ \pm_{l,1} \, x_l  \  \right\rangle   \  +_{\mathbb H} \, \
  %        \ \left\langle  \sum_{j=1}^{A}  \sum_{l=1}^{ K }  \ \pm_{j,3} \,  a_j \ \pm_{l,2} \, y_l  \ 
  %                  \right\rangle                                                                          \\
  %   &  %  \ = \ \left\langle  \sum_{i=1}^{N} \pm_{i,1} \, x_{i} \, \right\rangle   \  \
        %            \left\langle  \sum_{j=1}^{K} \pm_{j,2} \, y_j \, \right\rangle                         
     &     \ = \  \left\langle  \sum_{j=1}^{A}    %   \sum_{l=1}^{ N } 
           \sum_{l=1}^{ \max\{ N, K \} } \pm_{j,l,4} \ a_j \, \cdot \, x_l  \  \pm_{j,l,5} \ 
                a_j \, \cdot \, y_l \ \right\rangle                                                         \\   
     &   \ = \ \left\langle  \sum_{j=1}^{A}  \sum_{l=1}^{ N } 
                          \pm_{j,l,4} \ a_j \, \cdot \, x_l  \  \right\rangle   \  +_{\mathbb H} \, \
         \ \left\langle  \sum_{j=1}^{A}  \sum_{l=1}^{ K }  \ \pm_{j,l,5} \  a_j \, \cdot \, y_l  \ 
                    \right\rangle   \ = \   a * x +_{\mathbb H} \, a * y                              
       \end{align}  
            The signs  $ \pm_{j,l,4} \in \{ +,- \}  $ \ are determined by the following rules: \ If \
            $ \pm_{j,3} \pm_{l,1} = + + $ or if $ \pm_{j,3} \pm_{l,1} =  -- $ we set $ \pm_{j,l,4} := + $. 
            In the case \: 
            $ \pm_{j,3} \pm_{l,1} = -+ $ \: or if \: $  \pm_{j,3} \pm_{l,1} = + - $  we define 
            $ \pm_{j,l,4} := - $.  \ 
    %         $ + := \pm_{j,l,4} $  
            For $ \pm_{j,l,5} $ also a corresponding rule holds. The signs $ \pm_{j,l,5} \in \{ +,- \} $ \
            are conditioned by the pair of signs $ \pm_{j,3} \pm_{l,2} $.      \\
    %       behave as they do in the multiplication \, `$ * $'.      
    %       line \eqref{noch eine  wichtige Definition}. 
    %                       
            Please see the explanation after line \eqref{noch eine  wichtige Definition}. If \ 
            $ N \neq K  $ \ we use the rule \eqref {N ungleich K}  and the following one:
            If $ K < N $ we define \ $ \pm_{j,l,4} \, a_j \cdot x_l \pm_{j,l,5} \, a_j \cdot y_l \
            :=   \pm_{j,l,4} \, a_j \cdot x_l $ for $ K < l \leq N $, or if  $ K > N $ we define 
            $ \pm_{j,l,4} \, a_j \cdot x_l \pm_{j,l,5} \, a_j \cdot y_l := \pm_{j,l,5} \, a_j \cdot y_l $
            for $ N < l \leq K $.    \\
  %%%%%%%%%%%%%%%%%%%%%%%%%%%%%%%%          
            This ensures the left distributivity of  $ ({\mathbf{R}( \mathbb H)}, +_{\mathbb H}, *) $.
            The right distributivity works in the same manner. 
                                         
      Therefore it holds in \,  $({\mathbf{R}( \mathbb H)}, +_{\mathbb H}, * )$ \ the laws of distributivity, \ i.e. 
          $({\mathbf{R}( \mathbb H)}, +_{\mathbb H}, * )$    \ is a ring.  
  \end{proof}        
      \begin{remark}          \textrm
          This ring $ \mathbf{R}( \mathbb H) $  is free of zero divisors and it has an unit if and only if the
          generating semigroup  $ ( \mathbb H, \cdot) $ has an unit
          $ 1_{ \mathbb H} $, i.e. it holds $ 1_{ \mathbb H} \cdot a = a = a \cdot 1_{ \mathbb H} $
          for all $ a \in \mathbb H $. The unit in the ring $ \mathbf{R}( \mathbb H) $ in this case is 
          $ \left[ \left\langle +1_{ \mathbb H} \right\rangle \right]_{/ \cong} $.    
      \end{remark}                                                  
   \begin{lemma}
            In the case that the semigroup $ (\mathbb H , \cdot) $ is commutative, we get that 
            both operations ` $+_{ {\mathrm{Quot}}} $' and ` $\cdot_{ {\mathrm{Quot}}}$' from the tripel \ 
            $ ( {\mathrm{Quot}}, +_{ {\mathrm{Quot}}}, \cdot_{ {\mathrm{Quot}}} ) $ \ are commutative.  
         % and associative. 
         %   In this case of a commutative semigroup $ (\mathbb H , \cdot) $ also the laws of distributivity hold in \
         %   $( {\mathrm{Quot}}, +_{ {\mathrm{Quot}}}, \cdot_{ {\mathrm{Quot}}} )$.
            Furthermore, from the equations $ x \cdot_{ {\mathrm{Quot}}} y = x \cdot_{ {\mathrm{Quot}}} z $ or 
            $ y \cdot_{ {\mathrm{Quot}}} x = z \cdot_{ {\mathrm{Quot}}} x $ the equation
            $ y = z $ follows, for $ x,y,z \in  {\mathrm{Quot}} $ and $ x\neq 0_{ {\mathrm{Quot}}} $. 
    %             for $ x,y,z \in {\mathbf{F}( \mathbb H)}$.  
   \end{lemma}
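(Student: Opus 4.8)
The plan is to reduce both assertions to facts about the ring $\mathbf{R}(\mathbb H)$ that are already available: that $+_{\mathbb H}$ is commutative by the relation $\approx_2$, that $*$ is commutative precisely when $\mathbb H$ is (Remark \ref{erste Bemerkung}), that $(\mathbf{R}(\mathbb H),+_{\mathbb H})$ is an Abelian group with distributivity (Proposition \ref{die erste proposition}), and that $\mathbf{R}(\mathbb H)$ has no zero divisors. I will use throughout that two formal quotients are identified, $p/q = r/s$, exactly when $p * s = q * r$ in $\mathbf{R}(\mathbb H)$, and in particular $p/q = 0_{\mathrm{Quot}}$ iff $p = 0_{\mathbf{R}(\mathbb H)}$; this is the identification implicit in the construction of $\mathrm{Quot}$, and making it explicit is part of the job.

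For commutativity, I would argue directly from the formulas. Since $*$ is commutative, the products $\frac{p}{q}\cdot_{\mathrm{Quot}}\frac{r}{s} = \frac{p*r}{q*s}$ and $\frac{r}{s}\cdot_{\mathrm{Quot}}\frac{p}{q} = \frac{r*p}{s*q}$ have identical numerators and denominators, hence are equal; similarly, using \eqref{Addition}, the sums $\frac{p}{q}+_{\mathrm{Quot}}\frac{x}{y}$ and $\frac{x}{y}+_{\mathrm{Quot}}\frac{p}{q}$ have numerators $p*y +_{\mathbb H} q*x$ and $x*q +_{\mathbb H} y*p$, which coincide because $*$ is commutative and $+_{\mathbb H}$ is commutative, and they share the denominator $q*y = y*q$. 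The cases in which one argument is $0_{\mathrm{Quot}}$, or of the form $x/1_{\mathrm{Quot}}$, are commutative by the definitions, which were written symmetrically.

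For the cancellation property the key preliminary step is that $\mathbf{R}(\mathbb H)$ is itself cancellative for $*$: if $a*b = a*c$ with $a \neq 0_{\mathbf{R}(\mathbb H)}$, then $a*(b +_{\mathbb H} (-c)) = 0_{\mathbf{R}(\mathbb H)}$ by distributivity and the additive group structure, so $b +_{\mathbb H}(-c) = 0_{\mathbf{R}(\mathbb H)}$ as there are no zero divisors, i.e.\ $b = c$. Now write $x = p/q$, $y = r/s$, $z = t/u$ with $q,s,u \neq 0_{\mathbf{R}(\mathbb H)}$; the hypothesis $x \neq 0_{\mathrm{Quot}}$ gives $p \neq 0_{\mathbf{R}(\mathbb H)}$, hence $p*q \neq 0_{\mathbf{R}(\mathbb H)}$. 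If $y = 0_{\mathrm{Quot}}$ then $x\cdot_{\mathrm{Quot}} y = 0_{\mathrm{Quot}} = x\cdot_{\mathrm{Quot}} z = \frac{p*t}{q*u}$, so $p*t = 0_{\mathbf{R}(\mathbb H)}$, forcing $t = 0_{\mathbf{R}(\mathbb H)}$ and $z = 0_{\mathrm{Quot}} = y$; the case $z = 0_{\mathrm{Quot}}$ is symmetric. Otherwise $x\cdot_{\mathrm{Quot}} y = \frac{p*r}{q*s}$ and $x\cdot_{\mathrm{Quot}} z = \frac{p*t}{q*u}$, and equality unwinds by cross-multiplication to $(p*r)*(q*u) = (q*s)*(p*t)$ in $\mathbf{R}(\mathbb H)$; using associativity and commutativity of $*$ this rearranges to $(p*q)*(r*u) = (p*q)*(s*t)$, and cancelling the nonzero factor $p*q$ yields $r*u = s*t$, which is exactly $y = z$. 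The variant with $y\cdot_{\mathrm{Quot}} x = z\cdot_{\mathrm{Quot}} x$ follows identically (or from the commutativity just proved).

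The main obstacle is not the algebra, which is routine, but the bookkeeping around the identifications in $\mathrm{Quot}$: one must pin down the cross-multiplication equivalence the text leaves implicit, treat the degenerate arguments $0_{\mathrm{Quot}}$ and $x/1_{\mathrm{Quot}}$ consistently with it, and spell out the passage from ``no zero divisors'' to genuine cancellation in $\mathbf{R}(\mathbb H)$ via distributivity and additive inverses rather than assuming it.
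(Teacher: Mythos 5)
Your proof is correct within the paper's framework, but the cancellation half takes a genuinely different route from the paper. The paper's own proof is one line: writing $x=p/q$ with $p\neq 0_{\mathbf{R}(\mathbb H)}$, it multiplies the equation $x\cdot y = x\cdot z$ by the inverse $q/p$ of $x$ and concludes $y=z$; note that this step silently uses that $\frac{(p*q)*r}{(p*q)*s}$ may be identified with $\frac{r}{s}$, i.e. it really works modulo the relation $\approx_3$ introduced only after the lemma (the paper even writes $\cdot_{_F}$ inside the proof). You instead unwind equality by cross-multiplication and reduce to cancellativity of $*$ in $\mathbf{R}(\mathbb H)$, which you derive from the no-zero-divisors remark via distributivity and additive inverses. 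What your route buys: it makes explicit which identification on ${\mathrm{Quot}}$ is being used (the paper says the quotients are ``only formal'', and under literal formal equality the claim is even easier, since $p*r=p*t$ and $q*s=q*u$ give $r=t$, $s=u$ directly), it handles the degenerate case $y=0_{\mathrm{Quot}}$ explicitly, and it does not presuppose that $q/p$ acts as a genuine inverse before $\mathbf{F}(\mathbb H)$ is built. What the paper's route buys is brevity, at the cost of leaning on the field-level cancellation it has not yet established. Be aware that both arguments ultimately rest on the unproved remark that $\mathbf{R}(\mathbb H)$ has no zero divisors (your cancellation step needs $p*q\neq 0_{\mathbf{R}(\mathbb H)}$, the paper needs the denominators and the $\approx_3$-factor nonzero); that remark is an assumption of the paper rather than something either proof establishes, so your reliance on it is legitimate here but worth flagging as a hypothesis.
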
 
      \begin{proof}   
           The commutativity and the associativity of the semigroup $ (\mathbb H , \cdot) $ causes the
           commutativity and the associativity of both operations of the ring \ 
           $ ( {\mathbf{R}(\mathbb H)} , +_{\mathbb H}, *) $ \ and as a consequence  the commutativity and
           associativity of the operations in \ 
           $ ( {\mathrm{Quot}}, +_{ {\mathrm{Quot}}}, \cdot_{ {\mathrm{Quot}}} ) $, too. \\
          We prove the last claim of the lemma: \
           Let    $ x = p/q $ and $ y = r/s, z = v/w $ for $ p,q,r,s,v,w \in {\mathbf{R}( \mathbb H)} $ and   
           $ p,q,s,w \neq 0_{{\mathbf{R}}( \mathbb H)} $. We multiply the equation $ x \cdot_{_F} y = x \cdot_{_F} z $
           with the inverse $ q/p $ of $ x $  and we get $ y = z $. 
   %    In the case that the semigroup $ (\mathbb H , \cdot) $ is commutative we have that the tripel  \,
   %    $({\mathbf{F}( \mathbb H)}, +_{_F}, \cdot_{_F} )$ \ is a field.                      
   \end{proof} 
    We introduce a third relation  `$\approx_3$' to cancel common factors in $ {\mathrm{Quot}} $ to ensure the
    distributivity. %    The relation  `$\approx_3$' is new as far we know. 
    With this relation and if the multiplication 
    `$ * $' is commutative, the ordinary relation $ a/b \approx c/d $ if and only if $ ad = bc $ is fulfilled.  \\
    Let $ r,x,y \in \mathbf{R}(\mathbb H), \ r,y \neq 0_{\mathbf{R}(\mathbb H)} $. We say that two elements
    have the relation  `$\approx_3$' , 
   \begin{align}
                \frac{x}{y}  \ \approx_3  \ \frac{ r * x }{ r * y }  \ \ .
   \end{align}
  %  We introduce a third relation  `$\approx_3$' to avoid two different elements $ a,b $ from the set 
  %  $ {\mathrm{Quot}} $ such that $ a=b $. Let 
  %  $ r,s,x,y \in \mathbf{R}(\mathbb H), \ s,y, \neq 0_{\mathbf{R}(\mathbb H)} $. We say that $ \frac{r}{s} $ and
  %  $ \frac{x}{y} $ have the relation  `$\approx_3$' if an only if they fulfill the equation 
  %  $ r * y = s * x $. In this case we say 
 %%%%%%%%%%%%%%%%%%%%%%%%%%%%%%%%%%%%%%%%%%%%%%%%%%%%%%%%%%%%%%%%%%%%%%%%%%%%%%%%%%%%% 
   Let `$ \cong3 $' be the smallest equivalence relation on $ {\mathrm{Quot}} $ that includes the relation
   `$\approx_3$' . 
   We name the equivalent classes by $[ . ]_{/ \cong3} $ \ . 
   \begin{definition}
        Let  $ {\mathbf{F}( \mathbb H)}  :=  {\mathrm{Quot}}_{/ \cong3} $.  \\
        Let  $ +_{_F}, \cdot_{_F} $ be the canonical  continuations on $ {\mathbf{F}( \mathbb H)} $ of the operations
        $ +_{ {\mathrm{Quot}}}, \cdot_{ {\mathrm{Quot}}} $  for equivalent classes. For instance, we define
        the addition `$ +_{_F} $'   by  
    \begin{align}    \label{nochmal die Addition}       
             \left[ \frac{ x }{ y } \right]_{/ \cong3}  +_{_F}  \left[ \frac { v }{ w } \right]_{/ \cong3}  \: := \: 
             \left[ \frac{ x * w +_{\mathbb H} \, y * v }{ y * w }  \right]_{/ \cong3} .  
    \end{align} 
   \end{definition}
  %%%%%%%%%%%%%%%%%%%%%%%%%%%%%%%%%%%%%%%%%%%%%%%%%%%%%%%%%%%%%%%%%%%%%%%%%%%%%%%%%%%%
    \begin{lemma}
     %       In the case that the semigroup $ (\mathbb H , \cdot) $ is commutative, we get that 
     %       both operations ` $+_{ {\mathrm{Quot}}} $' and ` $\cdot_{ {\mathrm{Quot}}}$' from the tripel \ 
     %       $ ( {\mathrm{Quot}}, +_{ {\mathrm{Quot}}}, \cdot_{ {\mathrm{Quot}}} ) $ \ are commutative.  
     %    % and associative. 
            In the case of a commutative semigroup $ (\mathbb H , \cdot) $ the laws of distributivity hold in \
            $ ({\mathbf{F}( \mathbb H)}, +_{_F}, \cdot_{_F} ) $.   
     %       $( {\mathrm{Quot}}, +_{ {\mathrm{Quot}}}, \cdot_{ {\mathrm{Quot}}} )$.
     %       Furthermore, from the equations $ x \cdot_{ {\mathrm{Quot}}} y = x \cdot_{ {\mathrm{Quot}}} z $ or 
     %       $ y \cdot_{ {\mathrm{Quot}}} x = z \cdot_{ {\mathrm{Quot}}} x $
     %       follow  $ y = z $, for $ x,y,z \in  {\mathrm{Quot}} $ and $ x\neq 0_{ {\mathrm{Quot}}} $. 
    %             for $ x,y,z \in {\mathbf{F}( \mathbb H)}$.  
   \end{lemma}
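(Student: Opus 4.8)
The plan is to verify the left distributive law on chosen representatives in $\mathrm{Quot}$ and then obtain the right distributive law from the commutativity of $\cdot_{_F}$, which follows from that of $\cdot_{\mathrm{Quot}}$ in the previous lemma. First I would fix classes $a,b,c \in \mathbf{F}(\mathbb H)$ and pick representatives $a = [p/q]_{/\cong3}$, $b = [r/s]_{/\cong3}$, $c = [u/v]_{/\cong3}$ with $p,q,r,s,u,v \in \mathbf{R}(\mathbb H)$ and $q,s,v \neq 0_{\mathbf{R}(\mathbb H)}$. Since $+_{_F}$ and $\cdot_{_F}$ are by construction the canonical continuations of $+_{\mathrm{Quot}}$ and $\cdot_{\mathrm{Quot}}$ to $\cong3$-classes, hence well defined, it suffices to carry out the computation with these representatives inside $\mathrm{Quot}$ and to pass to $\cong3$-classes only at the end.

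Next I would expand both sides. Using \eqref{nochmal die Addition} and the definition of $\cdot_{\mathrm{Quot}}$,
\[
   a \cdot_{_F} (b +_{_F} c) \;=\; \left[\, \frac{p * (r*v +_{\mathbb H} s*u)}{q*(s*v)} \,\right]_{/\cong3}
   \;=\; \left[\, \frac{p*r*v +_{\mathbb H} p*s*u}{q*s*v} \,\right]_{/\cong3},
\]
where the second equality uses the left distributivity \eqref{Links Distributivitaet} and the associativity of $*$ in the ring $\mathbf{R}(\mathbb H)$. On the other hand,
\[
   a \cdot_{_F} b \,+_{_F}\, a \cdot_{_F} c \;=\; \left[\, \frac{(p*r)*(q*v) +_{\mathbb H} (q*s)*(p*u)}{(q*s)*(q*v)} \,\right]_{/\cong3}.
\]

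Now comes the decisive step. Because $\mathbb H$ is commutative, $*$ is commutative and associative on $\mathbf{R}(\mathbb H)$ by Remark \ref{erste Bemerkung}, so I may rewrite the last numerator as $q*(p*r*v) +_{\mathbb H} q*(p*s*u) = q*(p*r*v +_{\mathbb H} p*s*u)$ (once more by \eqref{Links Distributivitaet}) and the last denominator as $q*(q*s*v)$. Since $\mathbf{R}(\mathbb H)$ is free of zero divisors and $q,s,v \neq 0_{\mathbf{R}(\mathbb H)}$, we have $q*s*v \neq 0_{\mathbf{R}(\mathbb H)}$, so the relation $\approx_3$ applies with common factor $q$ and gives
\[
   \frac{q*(p*r*v +_{\mathbb H} p*s*u)}{q*(q*s*v)} \;\approx_3\; \frac{p*r*v +_{\mathbb H} p*s*u}{q*s*v}.
\]
Hence the two quotients computed above lie in the same $\cong3$-class, which proves $a \cdot_{_F} (b +_{_F} c) = a \cdot_{_F} b +_{_F} a \cdot_{_F} c$. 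Applying the commutativity of $\cdot_{_F}$ to this identity yields the right distributive law $(b +_{_F} c) \cdot_{_F} a = b \cdot_{_F} a +_{_F} c \cdot_{_F} a$, completing the argument.

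I expect the only genuine obstacle to be this middle manoeuvre: identifying the right common factor $q$ to cancel and noting that a single application of $\approx_3$ (together with its symmetric use) suffices, since all remaining steps are just the already-established ring axioms of $\mathbf{R}(\mathbb H)$. Two small points deserve to be made explicit — that every denominator occurring in the computation is $\neq 0_{\mathbf{R}(\mathbb H)}$, which is where the absence of zero divisors is used, and that nothing depends on the choice of representatives, which is precisely the well-definedness of $+_{_F}$ and $\cdot_{_F}$.
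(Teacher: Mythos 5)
Your proposal is correct and follows essentially the same route as the paper: verify the identity on representatives in $\mathrm{Quot}$, reduce everything to the distributivity, associativity and commutativity of $({\mathbf{R}(\mathbb H)}, +_{\mathbb H}, *)$, and cancel the common factor (your $q$) via the relation $\approx_3$. The paper only states the equation to be confirmed and appeals to these ring properties, so your write-up simply carries out in full the computation the paper leaves implicit, including the useful explicit remarks that the denominators stay nonzero and that well-definedness of $+_{_F}$ and $\cdot_{_F}$ is what licenses the choice of representatives.
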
 
      \begin{proof}   
     %      The commutativity and the associativity of the semigroup $ (\mathbb H , \cdot) $ causes the
     %      commutativity and the associativity of both operations of the ring \ 
     %      $ ( {\mathbf{R}(\mathbb H)} , +_{\mathbb H}, *) $ \ and as a consequence  the commutativity and
     %      associativity of the operations in \ 
     %      $ ( {\mathrm{Quot}}, +_{ {\mathrm{Quot}}}, \cdot_{ {\mathrm{Quot}}} ) $, too. \
           The distributivity of \ $ ({\mathbf{F}( \mathbb H)}, +_{_F}, \cdot_{_F} ) $ \  
           % $ (  {\mathrm{Quot}}, +_{ {\mathrm{Quot}}}, \cdot_{ {\mathrm{Quot}}} ) $ 
           relies on the distributivity and commutativity of the ring \
            $({\mathbf{R}( \mathbb H)}, +_{\mathbb H}, * )$.   For instance, to prove the left distributivity in \
              $ ({\mathbf{F}( \mathbb H)}, +_{_F}, \cdot_{_F} ) $  
      %     $ ( {\mathrm{Quot}}, +_{ {\mathrm{Quot}}}, \cdot_{ {\mathrm{Quot}}} ) $ \
           we need to confirm the equation
     \begin{align}
          &   \frac{r * a }{s * a} \ \cdot_{_F}  \left( \frac {x * b}{y * b} +_{_F}  \frac {v *c}{w * c} \right) 
                               \ = \ 
              \left( \frac {r * a}{s * a} \ \cdot_{_F} \frac {x * b}{y *b} \right) +_{_F} 
              \left( \frac {r * a}{s * a} \ \cdot_{_F} \frac {v * c}{w * c} \right)  \ ,          \\
          &   a,b,c,r,s,x,y,v,w \in \mathbf{R}(\mathbb H), \quad a,b,c,s,y,w \notin 0_{\mathbf{R}(\mathbb H)} 
  %         \frac{r}{s} \ \cdot_{ {\mathrm{Quot}}}  \left( \frac {x}{y} +_{ {\mathrm{Quot}}}  \frac {v}{w} \right) 
  %        \ = \     \left( \frac {r}{s} \ \cdot_{ {\mathrm{Quot}}} \ \frac {x}{y} \right) +_{ {\mathrm{Quot}}} 
  %           \left( \frac {r}{s} \ \cdot_{ {\mathrm{Quot}}} \ \frac {v}{w} \right) , \text{ for } \    
  %        r,s,x,y,v,w \in \mathbf{R}(\mathbb H), \quad s,y,w \neq 0_{\mathbf{R}(\mathbb H)}  
      %    \frac{r}{s} \ \cdot_{ {\mathrm{Quot}}}  \left( \frac {x}{y} +_{ {\mathrm{Quot}}}  \frac {v}{w} \right) 
      %    \ = \     \left( \frac {r}{s} \ \cdot_{ {\mathrm{Quot}}} \ \frac {x}{y} \right) +_{ {\mathrm{Quot}}} 
      %       \left( \frac {r}{s} \ \cdot_{ {\mathrm{Quot}}} \ \frac {v}{w} \right) , \text{ for } \    
      %    r,s,x,y,v,w \in \mathbf{R}(\mathbb H), \quad s,y,w \neq 0_{\mathbf{R}(\mathbb H)} 
     \end{align}
          We omitted the symbols `$ [ . ]_{/ \cong3} $' .      
   %      \begin{align}
   %       &   \frac{a * r}{a * s} \ \cdot_{_F}  \left( \frac {b * x}{b * y} +_{_F}  \frac {c * v}{c * w} \right) \ =  
   %           \frac {a * r}{a * s} \ \cdot_{_F} \frac {b * x}{b * y} +_{_F} 
   %           \frac {a * r}{a * s} \ \cdot_{_F} \frac {c * v}{c * w}  \ ,            \\ 
   %       &   a,b,c,r,s,x,y,v,w \in \mathbf{R}(\mathbb H), \quad a,b,c,s,y,w \notin 0_{\mathbf{R}(\mathbbH)}
   %      \end{align}          
   %        The proof of this equation is straightforward if we use the relation `$\approx_3$' from 
   %        \eqref{Definition der Relation in Quot(H) }.     \\                                                           %         Now we assume  $ x \cdot_{_F} y = x \cdot_{_F} z $ and $ x\neq 0_{{\mathbf{F}}( \mathbb H)} $. Let 
  %        We prove the last claim of the lemma: \
  %         Let    $ x = p/q $ and $ y = r/s, z = v/w $ for $ p,q,r,s,v,w \in {\mathbf{R}( \mathbb H)} $ and   
  %    $ p,q,s,w \neq 0_{{\mathbf{R}}( \mathbb H)} $. We multiply the equation $ x \cdot_{_F} y = x \cdot_{_F} z $
  %         with the inverse $ q/p $ of $ x $  and we get $ y = z $. 
   %    In the case that the semigroup $ (\mathbb H , \cdot) $ is commutative we have that the tripel  \,
   %    $({\mathbf{F}( \mathbb H)}, +_{_F}, \cdot_{_F} )$ \ is a field.                      
   \end{proof}  
    \begin{proposition}   \label{noch eine proposition}
             We can construct the set $ {\mathbf{F}( \mathbb H)} $, and if the generating semigroup 
             $ (\mathbb H , \cdot) $ is commutative, we have that  the tripel  \ 
             $ ({\mathbf{F}( \mathbb H)}, +_{_F}, \cdot_{_F} ) $ \ is a field, where  \\
             \centerline{$ +_{_F}, \cdot_{_F} :  {\mathbf{F}( \mathbb H)} \times  {\mathbf{F}( \mathbb H)}
                                                        \longrightarrow {\mathbf{F}( \mathbb H)} $.}                      \end{proposition}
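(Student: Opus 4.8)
The plan is to check the field axioms for $(\mathbf{F}(\mathbb H), +_{_F}, \cdot_{_F})$ while reusing as much of the earlier work as possible. The set $\mathbf{F}(\mathbb H) = \mathrm{Quot}_{/\cong3}$ is available the moment $\mathrm{Quot}$ and $\cong3$ are defined, so all the content lies in the algebraic structure. From Proposition~\ref{die erste proposition} and the remark after it, $(\mathbf{R}(\mathbb H),+_{\mathbb H},*)$ is a ring which, for commutative $\mathbb H$, is commutative and free of zero divisors; from the two lemmas above, $+_{\mathrm{Quot}}$ and $\cdot_{\mathrm{Quot}}$ are associative and commutative, and the distributive laws already hold in $(\mathbf{F}(\mathbb H),+_{_F},\cdot_{_F})$. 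Hence it remains to establish: (i) $+_{_F}$ and $\cdot_{_F}$ are well defined on $\mathrm{Quot}_{/\cong3}$; (ii) $(\mathbf{F}(\mathbb H),+_{_F})$ is an abelian group; (iii) there is a multiplicative identity $1_{_F}$ with $1_{_F}\neq 0_{_F}$; (iv) every element of $\mathbf{F}(\mathbb H)\setminus\{0_{_F}\}$ is $\cdot_{_F}$-invertible.

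For (i) I would check that a single $\approx_3$-step in either argument is respected: replacing $x/y$ by $(r*x)/(r*y)$ with $r\neq 0_{\mathbf{R}(\mathbb H)}$ turns $(x*v)/(y*w)$ into $(r*x*v)/(r*y*w)$, again a single $\approx_3$-step, and by the distributivity of $\mathbf{R}(\mathbb H)$ turns $(x*w+_{\mathbb H} y*v)/(y*w)$ into $\bigl(r*(x*w+_{\mathbb H} y*v)\bigr)/\bigl(r*(y*w)\bigr)$, likewise one $\approx_3$-step; since $\cong3$ is the equivalence relation generated by $\approx_3$, well-definedness follows. For (ii) the neutral element is $0_{_F}:=[0_{\mathrm{Quot}}]_{/\cong3}$ and the inverse of $[x/y]_{/\cong3}$ is $[(-x)/y]_{/\cong3}$: here $y*(-x)=-(x*y)$ by the sign rule for $*$, so the numerator of the sum is $x*y+_{\mathbb H}\bigl(-(x*y)\bigr)=0_{\mathbf{R}(\mathbb H)}$, and the denominator $y*y$ is nonzero because $\mathbf{R}(\mathbb H)$ has no zero divisors; associativity and commutativity are inherited from the first lemma.

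The heart of the argument is (iii)--(iv). The key point is that for all nonzero $p,q\in\mathbf{R}(\mathbb H)$ one has $p/p\cong3\,(q*p)/(q*p)=(p*q)/(p*q)\cong3\,q/q$ by commutativity of $*$, so all classes $[p/p]_{/\cong3}$ coincide; and since $[p/p]_{/\cong3}\cdot_{_F}[x/y]_{/\cong3}=[(p*x)/(p*y)]_{/\cong3}=[x/y]_{/\cong3}$, this common class is a two-sided identity, so by uniqueness of identities it equals $[1_{\mathrm{Quot}}]_{/\cong3}=:1_{_F}$. For inverses: if $[x/y]_{/\cong3}\neq 0_{_F}$ then $x\neq 0_{\mathbf{R}(\mathbb H)}$, because along any chain of $\approx_3$-steps the numerator stays nonzero (each step multiplies in, or cancels, a nonzero factor, and $\mathbf{R}(\mathbb H)$ has no zero divisors), so a nonzero numerator can never reach a zero one; then $[x/y]_{/\cong3}\cdot_{_F}[y/x]_{/\cong3}=[(x*y)/(y*x)]_{/\cong3}=[(x*y)/(x*y)]_{/\cong3}=1_{_F}$. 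The same non-vanishing-numerator invariant gives $1_{_F}\neq 0_{_F}$, since $[\langle +h\rangle]_{/\cong}$ is a nonzero element of $\mathbf{R}(\mathbb H)$ for any $h\in\mathbb H$.

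I expect the main obstacle to be the bookkeeping around the formal symbol $1_{\mathrm{Quot}}$ and the relation $\approx_3$: one must make sure that $1_{\mathrm{Quot}}$ really is identified in $\mathbf{F}(\mathbb H)$ with the classes $[p/p]_{/\cong3}$, and that passing to $\mathrm{Quot}_{/\cong3}$ never collapses a nonzero numerator to zero (which would destroy both nontriviality and the inverse formula). Both are settled by the non-vanishing-numerator invariant, but these are precisely the places where the construction could go wrong; the remaining axioms are routine verifications carried through the quotient maps from $\mathbf{R}(\mathbb H)$ and $\mathrm{Quot}$.
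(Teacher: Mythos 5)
Your verification is correct relative to what the paper has already put on record, and it follows the same route as the paper's proof -- build $\mathbf{F}(\mathbb H)=\mathrm{Quot}_{/\cong3}$ and check the field axioms by leaning on the ring $(\mathbf{R}(\mathbb H),+_{\mathbb H},*)$, the commutativity/cancellation lemma and the distributivity lemma -- but you actually carry out the steps that the paper compresses into ``most of the constructions are made already above'' and ``the operations are independent of the representatives'', together with the definition of division. Your single-$\approx_3$-step check of well-definedness of $+_{_F}$ and $\cdot_{_F}$, your observation that all classes $[p/p]_{/\cong3}$ coincide and form a two-sided unit (which tames the paper's purely formal symbol $1_{\mathrm{Quot}}$), your explicit additive inverse $[(-x)/y]_{/\cong3}$ and multiplicative inverse $[y/x]_{/\cong3}$, and the nonzero-numerator characterization of the classes distinct from $0_{_F}$ are exactly the verifications the paper leaves implicit; the paper's definition of $\big/_{_F}$ as multiplication by $[g/h]_{/\cong3}$ is your inverse statement in disguise. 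So this is the same approach, executed in full rather than sketched.

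One caveat, which you yourself flag as the crux: the non-vanishing-numerator invariant (needed both for $1_{_F}\neq 0_{_F}$ and for knowing that a class $\neq 0_{_F}$ has a nonzero numerator to invert) rests entirely on the claim that $\mathbf{R}(\mathbb H)$ is free of zero divisors. Citing the paper's Remark for this is fair in a blind attempt, but that Remark is asserted without proof and is false for some commutative semigroups: if $\mathbb H$ is the two-element group $\{e,g\}$ with $g\cdot g=e$, then $\langle +e+g\rangle * \langle +e-g\rangle \cong \langle\ \rangle = 0_{\mathbf{R}(\mathbb H)}$ although both factors are nonzero, and then an $\approx_3$-step with $r=\langle +e-g\rangle$ sends $\langle +e+g\rangle/q$ to a fraction with zero numerator, so your invariant -- and with it the proposition as stated, which assumes only commutativity of $\mathbb H$ -- breaks down. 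This weakness is inherited from the paper, not introduced by you; indeed your write-up is the more honest of the two precisely because it makes visible that an integral-domain hypothesis on $\mathbf{R}(\mathbb H)$ (e.g. $\mathbb H$ cancellative and torsion-free in a suitable sense) is the one ingredient the paper never establishes.
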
  
     \begin{proof}
           Most of the constructions are made already above. The operations are independent of the representatives of
           the equivalent classes. The division `$ \Big{/}_{_F} $' is defined by
               \begin{align}
                 \left[ \frac{r}{s} \right]_{/ \cong3} \ \Big{/}_{_F} \ \left[ \frac{h}{g} \right]_{/ \cong3} \ := \
                 \left[ \frac{r}{s}  \right]_{/ \cong3} \ \cdot_{_F} \ \left[ \frac{g}{h} \right]_{/ \cong3} \ := \
                 \left[ \frac{ r * g } { s * h } \right]_{/ \cong3}  
  %             (r/s) \ / \ (h/g) \ := \ r/s \ \cdot_{\mathrm{Quot} } \ g/h , \ \ \ 
                \text{ for } s,g,h \neq 0_{\mathbf{R}(\mathbb H)} \ .
   \end{align}
     \end{proof}
      We use the symbols `$+_{\mathbb H}$' and  `$+_{_F}$'  for two different operations on 
      $ {\mathbf{R}( \mathbb H)} $ and $ {\mathbf{F}( \mathbb H)} $, respectively. The neutral elements in 
      $ ({\mathbf{F}( \mathbb H)}, +_{_F}) $ and  $ ({\mathbf{F}( \mathbb H)}, \cdot_{_F} ) $ \ are 
      $ 0_{{\mathbf{F}( \mathbb H)}} := [0_{ {\mathrm{Quot}}}]_{\cong3} $ and 
      $ 1_{{\mathbf{F}( \mathbb H)}} := [1_{ {\mathrm{Quot}}}]_{\cong3} $, respectivily. The inverse element of 
      $ [r/s]_{\cong3} $ in   $ {\mathbf{F}( \mathbb H)} $ is $  [s/r]_{\cong3} $, of course.   \\       
 %    In the field  \, $({\mathbf{F}( \mathbb H)}, +_{_F}, \cdot_{_F} )$ \, we can cancel common factors 
 %    if we use 
  %   for  $ q,r,s \in {\mathbf{R}( \mathbb H)}, q,s \neq 0_{\mathbf{R}( \mathbb H)} $ 
  %   $( r,s,q \in {\mathbf{R}( \mathbb H)}, r,s \neq 0_{\mathbf{R}( \mathbb H)}) $.
 %    the relation \eqref{Definition der Relation in Quot(H) }.    \\
  %   the following relation \eqref{Definition der Relation in Quot(H) }.
  %     \begin{align} \label{eine weitere wichtige Definition}
  %               \frac{q * r}{q * s} \ \approx_3 \ \frac{r}{s}    %  \ =: \ \frac{q * r}{q * s } \ . 
  %     \end{align} 
 %    \begin{proposition}   \label{eine weitere proposition}
 %            If the generating semigroup $ (\mathbb H , \cdot) $ is commutative we can construct the set 
 %            $ {\mathbf{F}( \mathbb H)} $, \ and we have that  the tripel  \ 
 %            $ ({\mathbf{F}( \mathbb H)}, +_{_F}, \cdot_{_F} ) $ \ is a field, where  \\
 %            \centerline{$ +_{_F}, \cdot_{_F} :  {\mathbf{F}( \mathbb H)} \times  {\mathbf{F}( \mathbb H)}
 %                                                                  \longrightarrow {\mathbf{F}( \mathbb H)} $.}        %       
 %  \end{proposition}     
  %%%%%%%%%%%%%%%%%%%%%%%%%%                        
     We consider $\mathbb H$ as a part of $\mathbf{R}(\mathbb H)$  by the embedding $ e_1 $,
  %   and also  % as a part of  $\mathbf{F}(\mathbb H)$ \
  %   is $\mathbf{R}(\mathbb H)$ a part of $\mathbf{F}(\mathbb H)$ \ and \ $ \mathbb H $ is a part of
  %   $\mathbf{F}(\mathbb H)$   \    by the embeddings $ e_1 $ and $ e_2 $ and $ e_3 $, \ where 
   \begin{align}
  &  e_1 : \mathbb H \hookrightarrow {\mathbf R}(\mathbb H), \ \ \  
         a \mapsto \  \left[ \left\langle +a \right\rangle  \right]_{/ \cong}  \ .   
 %        \text{ and } \ \   e_2 :  \mathbf R(\mathbb H) \hookrightarrow  \mathbf F(\mathbb H)\, , \\
 % &      \text{and }  \ \   e_3 := e_2 \circ e_1 , \ \text{ where }     \ \
  %   &  x \mapsto [ \left\langle +x \right\rangle ]_{/ \cong} \ ,  \ \ \
  %     \text{ and} \ \   e_2 : \mathbf{R}(\mathbb H) \hookrightarrow  \mathbf{F}(\mathbb H), \
 %    e_2( 0_{\mathbf{R}(\mathbb H)}) := 0_{ \mathbf{F}(\mathbb H)}, \ \text{ and }   \\    
 % &  e_2 \left( \left\langle  \pm_1 x_1 \pm_2 x_2 \pm_3 x_3 \, .... \, \pm_{N-1}  x_{N-1} \pm_N x_N \,
 %      \right\rangle \right) \, := \,  \frac{ \left\langle \pm_1 x_1 \pm_2 x_2 \pm_3 x_3 
 %       \, ....  \, \pm_{N-1}  x_{N-1} \pm_N x_N \, \right\rangle \ } {1_{\mathbf{F}(\mathbb H)}} .     
  %    &  e_2 \left( \left[ \left\langle  \pm_1 x_1 \pm_2 x_2 \pm_3 x_3 \, .... \, \pm_{N-1}  x_{N-1} \pm_N x_N \,
  %     \right\rangle \right]_{/ \cong} \right) \, := \,  \frac{ \left[ \left\langle \pm_1 x_1 \pm_2 x_2 \pm_3 x_3 
  %      \, ....  \, \pm_{N-1}  x_{N-1} \pm_N x_N \, \right\rangle \right]_{/ \cong} \ } {1_{\mathbf{F}(\mathbb H)}} .
   \end{align}  
  %    \end{eqnarray}  
     Note that $e_1: ( \mathbb H, \cdot) \rightarrow  ({\mathbf{R}}(\mathbb H), *) $ is a semigroup homomorphism,
     i.e. $ e_1 (a \cdot b)  = e_1(a) * e_1(b) $.   % \ and the map 
  %   $e_2: \ ( {\mathbf{R}(\mathbb H)} , +_{\mathbb H}, *) \longrightarrow ({\mathbf{F}( \mathbb H)}, +_{_F},           %       \cdot_{_F} ) $ \,
  %   is a ring homomorphism, i.e. it holds \ $ e_2 (x +_{\mathbb H} \, y)  = e_2(x) +_{_F} \,  e_2(y) $ \, and \,  
  %   $ e_2 (x * y)  = e_2(x) \cdot_{_F} e_2(y) $ \ for all $ x,y \in  {\mathbf{R}}(\mathbb H) $. Also it holds that \,
  %   $e_3: \ (\mathbb H , \cdot ) \longrightarrow ({\mathbf{F}( \mathbb H)}, \cdot_{_F} ) $ \, 
  %    is a semigroup homomorphism, i.e. \, $ e_3( a \cdot b) =
  %     e_3 (a) \cdot_{_F} e_3 (b) $, \ for \,  $ a,b \in {\mathbb H} $. \  
      We constructed the set $ {\mathbf{F}( \mathbb H)} $, and in the special case of a commutative semigroup
      $ (\mathbb H , \cdot) $ we get that \, $({\mathbf{F}( \mathbb H)}, +_{_F}, \cdot_{_F} )$ \ is a field.   \\
      In this case of a commutative semigroup $ (\mathbb H , \cdot) $ 
      we regard $\mathbf{R}(\mathbb H)$  as a part of $\mathbf{F}(\mathbb H)$  by the embedding $ e_2 $,       \\  
  \centerline {$e_2:  \ ( {\mathbf{R}(\mathbb H)} , +_{\mathbb H}, *) \hookrightarrow \ 
               ({\mathbf{F}( \mathbb H)}, +_{_F} ,               \cdot_{_F} ) \ \     \text{ where }$ }        \\
  %       \text{and let }  \ \   e_3 := e_2 \circ e_1 , \ \
    \ \ $ e_2( 0_{\mathbf{R}(\mathbb H)}) := 0_{ \mathbf{F}(\mathbb H)}, \ \text{ and for }  \
      \left[ \left\langle  \pm_1 x_1 \pm_2 x_2 \pm_3 x_3 \, \ldots \, \pm_{N-1}  x_{N-1} \pm_N x_N \,
        \right\rangle \right]_{/ \cong}  \in  \mathbf{R}(\mathbb H)  $ \ we define                             \\
    \begin{align}
        e_2 \left( \left[ \left\langle  \pm_1 x_1 \pm_2 x_2 \pm_3 x_3 \, \ldots \, \pm_{N-1}  x_{N-1} \pm_N x_N \,
        \right\rangle \right]_{/ \cong} \right) \, := \,  \frac{ \left[ \left\langle \pm_1 x_1 \pm_2 x_2 \pm_3 x_3 
        \, \ldots  \, \pm_{N-1}  x_{N-1} \pm_N x_N \, \right\rangle \right]_{/ \cong} \ } {1_{\mathbf{F}(\mathbb H)}}.
    \end{align}    
     We get that the map $ e_2 $ is a ring homomorphism, i.e. it holds \ 
     $ e_2 (x +_{\mathbb H} \, y)  = e_2(x) +_{_F} \,  e_2(y) $ \, and \,  
     $ e_2 (x * y)  = e_2(x) \cdot_{_F} e_2(y) $ \ for all $ x,y \in  {\mathbf{R}}(\mathbb H) $. Also it holds that \,
     $e_2  \circ  e_1: \ (\mathbb H , \cdot ) \longrightarrow ({\mathbf{F}( \mathbb H)}, \cdot_{_F} ) $ \, 
      is a semigroup homomorphism, i.e. \, $ e_2  \circ  e_1( a \cdot b) =
       e_2  \circ  e_1 (a) \cdot_{_F} e_2  \circ  e_1 (b) $, \ for \,  $ a,b \in {\mathbb H} $. 
 %%%%%%%%%%%%%%%%%%%%%%%%%%%%%%%%%%%%%%%%%%%%%%%%%%%%%%%%%%%%%%%%%%%%%%%%%  
   \begin{theorem}   \label{eindeutiger Ringhomomorphismus}
  %   Let \ $ f: \mathbb H \rightarrow R $ \, be an arbitrary map, where the tripel  
     Let \ $ f: ( \mathbb H, \cdot) \rightarrow (R, \cdot_{_{Ring}} ) $ \, be a semigroup homomorphism,   
     i.e. $ f(a \cdot b)= f(a) \, \cdot_{_{Ring}}  f(b)\, \text{ for } a,b \in {\mathbb H} $, where the tripel 
        $(R, +_{_{Ring}}, \cdot_{_{Ring}} ) $ is a ring.
                                                                  
      Then there exists a unique ring homomorphism $ f^{\sharp}:  ({\mathbf{R}(\mathbb H)} , +_{\mathbb H}, *) 
      \rightarrow  (R, +_{_{Ring}}, \cdot_{_{Ring}}) $ 
      such that the following diagram commutes, i.e. $ f =  f^{\sharp}  \circ e_1 $. 
    %       and a unique field homomorphism 
    %       $ g^{\sharp} $ such that the following two diagrams commute, i.e. $ f =  f^{\sharp}  \circ e_1 $ and  
    %       $ g = g^{\sharp} \circ e_2 \circ e_1$. 
   \end{theorem}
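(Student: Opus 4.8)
The plan is to define $f^{\sharp}$ directly on representatives and then check that it descends to the quotient. For a formal sum $x = \left\langle \sum_{i=1}^{N} \pm_i\, x_i \right\rangle \in sums(\mathbb H)$, set
\[
  f^{\sharp}(x) \ := \ \sum_{i=1}^{N} \pm_i\, f(x_i) \ \in R,
\]
where the sum and the signs are now read using $+_{_{Ring}}$ and the additive inverse in $R$, and the empty sum is sent to the zero $0_R$ of $R$. The first task is well-definedness on $sums(\mathbb H)_{/\cong}$. Since $\cong$ is the smallest equivalence relation containing $\approx_1$ and $\approx_2$, it suffices to see that $f^{\sharp}$ takes equal values on any $\approx_2$-related pair and on any $\approx_1$-related pair. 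For $\approx_2$ this holds because $(R,+_{_{Ring}})$ is abelian, so permuting the entries does not change the value; for $\approx_1$, deleting a pair $\{+h,-h\}$ of entries changes the value by $f(h) +_{_{Ring}} (-f(h)) = 0_R$. Hence $f^{\sharp}: \mathbf{R}(\mathbb H) \to R$ is well defined, and by construction $f^{\sharp}(e_1(a)) = f^{\sharp}(\langle +a\rangle) = f(a)$, i.e. $f = f^{\sharp}\circ e_1$; in particular $f^{\sharp}(0_{\mathbf{R}(\mathbb H)}) = 0_R$.

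Next I would check that $f^{\sharp}$ is a ring homomorphism. Additivity $f^{\sharp}(x +_{\mathbb H} y) = f^{\sharp}(x) +_{_{Ring}} f^{\sharp}(y)$ is immediate from the definition of $+_{\mathbb H}$: up to the length conventions and up to $\approx_2$, the entry list of $x +_{\mathbb H} y$ is the concatenation of those of $x$ and $y$, so applying $f^{\sharp}$ just splits the sum in $R$. For multiplicativity, unwind the definition of $*$: using the sign rule defining $\pm_{i,j,*}$, the distributivity in $R$, and the fact that $f$ is a semigroup homomorphism, one obtains
\begin{align*}
  f^{\sharp}(x * y) &= \sum_{i=1}^{N}\sum_{j=1}^{K} \pm_{i,j,*}\, f(x_i \cdot y_j) \\
  &= \left( \sum_{i=1}^{N} \pm_{i,1}\, f(x_i) \right) \cdot_{_{Ring}} \left( \sum_{j=1}^{K} \pm_{j,2}\, f(y_j) \right) = f^{\sharp}(x) \cdot_{_{Ring}} f^{\sharp}(y),
\end{align*}
the decisive point being that $\pm_{i,j,*}$ is exactly the product of the signs $\pm_{i,1}$ and $\pm_{j,2}$, which is precisely the sign that arises when multiplying out the two $R$-sums; the cases where $x$ or $y$ equals $0_{\mathbf{R}(\mathbb H)}$ reduce to $f^{\sharp}(0_{\mathbf{R}(\mathbb H)}) = 0_R$. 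If $\mathbb H$ has a unit $1_{\mathbb H}$ and $f$ is unital, then $f^{\sharp}$ sends the unit $\langle +1_{\mathbb H}\rangle$ of $\mathbf{R}(\mathbb H)$ to $1_R$; otherwise, in keeping with the rest of the paper, we work with not necessarily unital rings.

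For uniqueness, suppose $g: \mathbf{R}(\mathbb H) \to R$ is any ring homomorphism with $g \circ e_1 = f$. Then $g(\langle +a\rangle) = f(a)$ for all $a \in \mathbb H$, and since $g$ is additive and $\langle +a\rangle +_{\mathbb H} \langle -a\rangle = 0_{\mathbf{R}(\mathbb H)}$, we get $g(\langle -a\rangle) = -f(a)$. But every element of $\mathbf{R}(\mathbb H)$ is a finite $+_{\mathbb H}$-sum of single-entry generators $\langle \pm a\rangle$ (and $0_{\mathbf{R}(\mathbb H)}$ is the empty sum, necessarily sent to $0_R$), so $g$ is forced to coincide with $f^{\sharp}$ on all of $\mathbf{R}(\mathbb H)$. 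This proves uniqueness and completes the argument. The routine verifications above are short; the only place demanding genuine care is the multiplicativity step, where one must confirm that the formal sign rule $\pm_{i,j,*}$ and the length conventions in the definitions of $*$ and $+_{\mathbb H}$ line up exactly with the distributive expansion in $R$. I do not expect any real obstacle beyond this bookkeeping.
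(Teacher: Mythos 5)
Your proposal is correct and follows essentially the same route as the paper: define $f^{\sharp}$ on a representative by applying $f$ entrywise with the signs read in $(R,+_{_{Ring}})$, check invariance under $\approx_1$ and $\approx_2$, verify additivity and multiplicativity (the sign rule for $\pm_{i,j,*}$ matching the distributive expansion in $R$), and derive uniqueness from $f = f^{\sharp}\circ e_1$ together with the fact that every class is a sum of single-entry generators. The only difference is that you carry out the well-definedness and homomorphism verifications that the paper dismisses as ``straightforward,'' which is a welcome addition rather than a deviation.
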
 
   \begin{proof}    
     %   We call \, $ 0_K $  \, the zero element in $ (K,+_K) $, i.e. $ a +_K \, 0_K = a $, for all $ a \in K $. \
        We call \, $ 0_{Ring} $  \, the zero element in $ (R,+_{_{Ring}}) $, i.e. $ r +_{_{Ring}} \ 0_{Ring} = r $, 
        for all $ r \in R $. 
    %    \ Let $ 1_{_K} $ be the neutral element in  $ (K, *_{K})$, i.e. $a *_{K} 1_{_K} = a $. \
        We abbreviate the element  $ x \in   sums ({\mathbb H}) $,  \, $ x \neq  0_{\mathbf{R}(\mathbb H)}  $
                                                                                                       \ by      \\ 
  %        \centerline{$ p := \left\langle \pm_{1,1} \, x_1 \pm_{2,1} \, x_2 \pm_{3,1} \, x_3 \, ....  \,
  %                     \pm_{N-1,1} \,  x_{N-1} \pm_{N,1} \, x_N \right\rangle $.}  \\
          \centerline{$ x := \left\langle \pm_{1} \, x_1 \pm_{2} \, x_2 \pm_{3} \, x_3 \, \ldots  \,
                       \pm_{N-1} \,  x_{N-1} \pm_{N} \, x_N \right\rangle $.}  \\                       
       We define the map \  $ f^{\sharp} $ \  by 
    %     We define the maps \  $ f^{\sharp} $ and  $ g^{\sharp} $, respectively, by    
   \begin{align}  
  %           ( {\mathbf{R}(\mathbb H)} , +_{\mathbb H}, *) \\  
      &   f^{\sharp} :  ( {\mathbf{R}(\mathbb H)} , +_{\mathbb H}, *) \longrightarrow  (R, +_{_{Ring}},
            \cdot_{_{Ring}}), \ \ \text{ we set} \ \  f^{\sharp}( 0_{\mathbf R({\mathbb H})}) := 0_{Ring} 
            \ \ \text{ and }       \\   
      &     f^{\sharp} \left( [x]_{/ \cong} \right)   \ :=  \
   %               \left( \,  \left\langle \pm_{1} \, x_1 \pm_{2} x_2 \pm_{3} \, x_3 \, .... \, 
   %               \pm_{N-1} \, x_{N-1} \pm_{N} \, x_N \right\rangle \, \right)   \qquad   :=                    \\
            \pm_{1} f(x_1) \pm_{2} f(x_2) \pm_{3} f(x_3) \, \ldots \, \pm_{N-1}  f(x_{N-1}) \pm_{N} f(x_N) \ . %   \\
   %      \qquad { \text { and }} \qquad                                                                             
   %   &   g^{\sharp} : ({\mathbf{F}( \mathbb H)}, +_{_F}, \cdot_{_F} )  \longrightarrow (K, +_{K} , *_{K})          \\
   %   &        g^{\sharp} ( 0_{\mathbf{F}(\mathbb H)} ) :=  0_{_K} \ \text{ and } \ 
   %         g^{\sharp} ( 1_{\mathbf{F}(\mathbb H)} ) :=  1_{_K} \ \text{ and} \ 
   %       g^{\sharp} \left( \frac{p}{ 1_{\mathbf{F}(\mathbb H)} } \right) \ :=  \\ % \frac{ g(p)}{1_{_K}} \ := \    \\
   %   &    \pm_{1,1} \, g(x_1) \pm_{2,1} \, g(x_2) \pm_{3,1} \, g(x_3) \, .... \, 
   %                                    \pm_{N-1,1} \, g(x_{N-1}) \pm_{N,1} \, g(x_N)\ \ \quad\text{ as well as} \\
   %         \text{ for } \ \                                                                                   
   %   &   w :=   \frac{ p }
   %            { \left\langle  \pm_{1,2} \, y_1 \pm_{2,2} \, y_2 \pm_{3,2} \, y_3 \, .... 
   %             \pm_{M-1,2} \,  y_{M-1} \pm_{M,2} \, y_M \right\rangle } \ \in \mathbf{F}(\mathbb H) \          
   %       \quad \text{ we define }                                                                          \\
   %   &      g^{\sharp} (w)  \ := \ \frac{\pm_{1,1} \, g(x_1) \pm_{2,1} \, g(x_2) \pm_{3,1} \, g(x_3) \, .... \, 
   %         \pm_{N-1,1} \, g(x_{N-1}) \pm_{N,1} \, g(x_N) }
   %         {\pm_{1,2} \, g(y_1) \pm_{2,2} \, g(y_2) \pm_{3,2} \, g(y_3) \, .... \, 
   %        \pm_{M-1,2} \, g(y_{M-1}) \, \pm_{M,2} \, g(y_M)} 
   \end{align}
   %   only if the denominator is not zero, i.e. \\
   %              \centerline {  $ 0_{K}  \neq \pm_{1,2} \, g(y_1) \pm_{2,2} \, g(y_2) \pm_{3,2} \, g(y_3) \, .... \, 
   %                              \pm_{M-1,2} \,  g(y_{M-1}) \pm_{M,2} \, g(y_M)  $    \ .       }                        %     \text{and \ }                                                        0_{\mathbf R(H)})  \neq
   %   \left\langle  \pm_{1,2} y_1 \pm_{2,2} y_2 \pm_{3,2} y_3 \, .... \, \pm_{k-1,2}  y_{k-1} \pm_{k,2} y_k
   %                                     \right\rangle  $   \                    .  
   %               
           Note that the addition on the right hand side is in the ring $ R $.            
      The uniqueness of \  $ f^{\sharp} $ \, is easy,  since 
   %    The uniqueness of \  $ f^{\sharp} $ and  $ g^{\sharp} $ \, is easy,  since 
         $ f (a) =  f^{\sharp}  \circ e_1 (a) = 
         f^{\sharp} \left( \left[ \left\langle +a \right\rangle  \right]_{/ \cong} \right) = + f(a) $ \ 
         for each \, $ a \in {\mathbb H}$. \ Therefore the values of $ f^{\sharp} $ are determined by the values of
         $ f $ on $ \mathbb H $. 
                                                                                       \\ 
   %      $ f (a) =  f^{\sharp}  \circ e_1 (a) = f^{\sharp} \left( [ a ]_{/ \cong} \right) = + f(a) $. \\ 
   %      as well as \
   %      $ g(a) = g^{\sharp} \circ e_2 \circ e_1 (a) =  g^{\sharp} \circ e_2 (+<a>) =  
   %      g^{\sharp} \left( \frac{+<a>}{ 1_{_F}} \right) =  \frac{+g(a)}{ 1_{_K}} = g(a) $ . Therefor the values
   %      the two maps $ f^{\sharp} $  and  $  g^{\sharp} $ \ are determined as it is described above 
   %      to fulfill the identities $ f =  f^{\sharp}  \circ e_1 $ and $ g = g^{\sharp} \circ e_2 \circ e_1$. 
                
         The map  \,  $ f^{\sharp} $ \, is a ring homomorphism, i.e. 
    %%%%%%%%%%%%%%%%%%%%%%%%%%%%%%%%%%%%%%%%%%%%%%%%%%%%%%%%%%%%%%%%%%%%%%%%%%%%%%%55     
         $ f^{\sharp}(   x +_{\mathbb H}   y ) 
         = f^{\sharp}( x ) +_{Ring} f^{\sharp}( y ) $ \ and 
         $ f^{\sharp} \left( x * y \right) 
         = f^{\sharp} \left( x \right) \cdot_{Ring} \,  f^{\sharp} \left( y \right) $, 
                         where             \, $ x ,  y \in {\mathbf{R}(\mathbb H)} $.     %, i.e. 
   %      $ x, y \in sums({\mathbb H)}  $.  
  %%%%%%%%%%%%%%%%%%%%%%%%%%%%%%%%%%%%%%%%%%%%%%%%%%%%%%%%%%%%%%%%%%%%%%%%%%%%%%%%%%%%%%%%%%%%%%%%%%   
   %      $ f^{\sharp}(  [ a ]_{/ \cong} +_{\mathbb H}  [ b ]_{/ \cong} ) 
   %      = f^{\sharp}( [ a ]_{/ \cong}) +_{Ring}  f^{\sharp}( [ b ]_{/ \cong}) $ \ and 
   %      $ f^{\sharp} \left(  [ a ]_{/ \cong} *  [ b ]_{/ \cong} \right) 
   %      = f^{\sharp} \left( [ a ]_{/ \cong} \right) \cdot_{Ring} \,  f^{\sharp} \left( [ b ]_{/ \cong} \right) $, 
   %                      where             \, $ [ a ]_{/ \cong},  [ b ]_{/ \cong} \in {\mathbf{R}(\mathbb H)} $, i.e. 
   %      $ a, b \in sums({\mathbb H)}  $.  
 %%%%%%%%%%%%%%%%%%%%%%%%%%%555%%%%%%%%%%%%%%%%%%%%%%%%%%%%%%%%%%%%%%%%%%%%%%%%%%%%%%%%%%%%%%%           
         The proof that $ f^{\sharp} $  truly is a ring homomorphism is straightforward. The map  $ f^{\sharp} $  
         is independent of representatives. \\
   %%%%%%%%%%%%%      Please see the  following diagram. 
   %      Note that $  f^{\sharp} $ is well defined on equivalent classes, i.e. if 
   %      $ a \cong b $, i.e.   $ [ a ]_{/ \cong} = [ b ]_{/ \cong} $, it follows
   %      $ f^{\sharp} ([ a ]_{/ \cong}) =  f^{\sharp} ([ b ]_{/ \cong}) $ for  $a,b \in  sums({\mathbb H)}$.     
   %             %      and that \, $ g^{\sharp} $ \,  is a field homomorphism.
   %      \newpage
   %      $$  \begin{xy}
   %                            \xymatrix{
   %          &   ( {\mathbb H}, \cdot )         \ar@{^{(}->}[rr]^{e_1}       \ar[drrrr]_{g}  
   %    &     &   ({\mathbf{R}(\mathbb H)} , +_{\mathbb H}, *)     \ar@{^{(}->}[rr]^{e_2}       
   %    &     &    ({\mathbf{F}( \mathbb H)}, +_{_F}, \cdot_{_F})    \ar[d]_{g^{\sharp}}                            \\
   %    &     &           &      &      &  (K, +_{K} , *_{K})   }
   %          \end{xy}   
   %       $$
       \\ $$   
             \begin{xy}        \xymatrix{
             &   ( {\mathbb H}, \cdot )     \ar@{^{(}->}[rr]^{e_1}          \ar[drr]_{f}              
       &      &   ({\mathbf{R}(\mathbb H)} , +_{\mathbb H}, *)    \ar[d]_{f^{\sharp}}                         \\
   %          &    ({\mathbf{F}( \mathbb H)}, +_{_F}, \cdot_{_F})                                             \\
       &      &                 &     (R, +_{_{Ring}}, \cdot_{_{Ring}})  &         }
             \end{xy}     
         $$
   \end{proof}  
   \begin{theorem}     \label{eindeutiger Koerperhomomorphismus}
      Let \ $ g: ( \mathbb H, \cdot) \rightarrow (F, \cdot_{_{Field}} ) $ \, be another semigroup homomorphism,   
      i.e. $ g(a \cdot b)= g(a) \, \cdot_{_{Field}}  g(b) $,
  %    , and let \, $ g: ( \mathbb H, \cdot) \rightarrow (K, *_{K}) $ \, be a second semigroup homomorphism,
  %         i.e. $ g(a \cdot b)= g(a) *_{K} g(b) $. \ Let the tripel \ $ (K, +_{K} , *_{K}) $  \ be  a field. 
  %       Let \, $ g: ( \mathbb H, \cdot) \rightarrow (K, *_{K} ) $, \, be the semigroup homomorphism. 
  %   Let \ $ g: \mathbb H \rightarrow F $ \, be another map,   
     where the tripel $(F, +_{_{Field}}, \cdot_{_{Field}} ) $ \, is a field. Since it is a field, it is a ring, too.  
  %    Since the tripel  $(F, +_{_{Field}}, \cdot_{_{Field}} ) $ is a field, it is also a ring. 
                                                                                                
          By  the above Theorem \ref{eindeutiger Ringhomomorphismus} there exists a unique ring homomorphism 
          $ g^{\sharp} $ such that $ g = g^{\sharp} \circ e_1 $.  
       Additionally we assume that this map $ g^{\sharp} $ is  injective.   
  %    , and let \, $ g: ( \mathbb H, \cdot) \rightarrow (K, *_{K}) $ \, be a second semigroup homomorphism,
  %         i.e. $ g(a \cdot b)= g(a) *_{K} g(b) $. \ Let the tripel \ $ (K, +_{K} , *_{K}) $  \ be  a field. 
     %       Let \, $ g: ( \mathbb H, \cdot) \rightarrow (K, *_{K} ) $, \, be the semigroup homomorphism. 
                                                                 
      Then it exists a unique field homomorphism \
     $ g^{\nabla} : ({\mathbf{F}( \mathbb H)}, +_{_F}, \cdot_{_F} )  \longrightarrow (F, +_{_{Field}} ,                     \cdot_{_{Field}}) $ such that the diagram after next commutes, i.e. $ g =  g^{\nabla} \circ e_2 \circ e_1$ and  
     $ g^{\sharp} =  g^{\nabla} \circ e_2 $.
    %       and a unique field homomorphism 
    %       $ g^{\sharp} $ such that the following two diagrams commute, i.e. $ f =  f^{\sharp}  \circ e_1 $ and  
    %       $ g = g^{\sharp} \circ e_2 \circ e_1$. 
  \end{theorem}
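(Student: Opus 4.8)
The plan is to define $g^{\nabla}$ by the only formula that the compatibility condition allows, and then to check in turn that it is well defined, that it is a field homomorphism, and that it is unique. Concretely, for $p \in \mathbf{R}(\mathbb H)$ and $q \in \mathbf{R}(\mathbb H)$ with $q \neq 0_{\mathbf{R}(\mathbb H)}$ I would set
\[
 g^{\nabla}\!\left(\left[\tfrac{p}{q}\right]_{/ \cong3}\right) \; := \; g^{\sharp}(p) \cdot_{_{Field}} \bigl(g^{\sharp}(q)\bigr)^{-1},
\]
together with $g^{\nabla}(0_{\mathbf{F}(\mathbb H)}) := 0_{Field}$, $g^{\nabla}(1_{\mathbf{F}(\mathbb H)}) := 1_{Field}$, and $g^{\nabla}\bigl([p/1_{\mathrm{Quot}}]_{/\cong3}\bigr):=g^{\sharp}(p)$. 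The inverse $\bigl(g^{\sharp}(q)\bigr)^{-1}$ exists in the field $F$ precisely because $g^{\sharp}$ is assumed injective: then $\ker g^{\sharp} = \{0_{\mathbf{R}(\mathbb H)}\}$, so $q \neq 0_{\mathbf{R}(\mathbb H)}$ forces $g^{\sharp}(q) \neq 0_{Field}$, which is invertible since $F$ is a field.

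The first substantial step is that this formula does not depend on the chosen representative, and here there are two layers. On $\mathrm{Quot}$ the quotient $p/q$ is only formal, so I would verify that $g^{\sharp}(p)\bigl(g^{\sharp}(q)\bigr)^{-1}$ is compatible with $+_{\mathrm{Quot}}$ and $\cdot_{\mathrm{Quot}}$; since $g^{\sharp}$ is a ring homomorphism and $F$ is commutative, this reduces to the familiar identities $\tfrac{a}{b}+\tfrac{c}{d}=\tfrac{ad+bc}{bd}$ and $\tfrac{a}{b}\cdot\tfrac{c}{d}=\tfrac{ac}{bd}$ computed in $F$. For the relation $\cong3$ it suffices to check the generator: if $\tfrac{x}{y}\approx_3\tfrac{r*x}{r*y}$ with $r,y\neq 0_{\mathbf{R}(\mathbb H)}$, then
\[
 g^{\sharp}(r*x)\bigl(g^{\sharp}(r*y)\bigr)^{-1} \;=\; g^{\sharp}(r)g^{\sharp}(x)\bigl(g^{\sharp}(r)g^{\sharp}(y)\bigr)^{-1} \;=\; g^{\sharp}(x)\bigl(g^{\sharp}(y)\bigr)^{-1},
\]
again using commutativity of $F$ and $g^{\sharp}(r)\neq 0_{Field}$; since $\cong3$ is the smallest equivalence relation containing $\approx_3$, the value of $g^{\nabla}$ is constant on each class.

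Once well-definedness is in hand, that $g^{\nabla}$ is a field homomorphism follows from the explicit formula \eqref{nochmal die Addition} for $+_{_F}$ and the corresponding one for $\cdot_{_F}$, translated through $g^{\sharp}$, together with the normalizations $0_{\mathbf{F}(\mathbb H)}\mapsto 0_{Field}$ and $1_{\mathbf{F}(\mathbb H)}\mapsto 1_{Field}$; preservation of inverses is then automatic. The identity $g^{\sharp}=g^{\nabla}\circ e_2$ is immediate from $e_2(x)=[x/1_{\mathrm{Quot}}]_{/\cong3}$ and the definition of $g^{\nabla}$ on such classes, and composing with $g=g^{\sharp}\circ e_1$ from Theorem~\ref{eindeutiger Ringhomomorphismus} gives $g=g^{\nabla}\circ e_2\circ e_1$. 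Thus the diagram
\[
 \begin{xy} \xymatrix{
 ( {\mathbb H}, \cdot )  \ar@{^{(}->}[r]^{e_1}  \ar[drr]_{g}
 &   ({\mathbf{R}(\mathbb H)} , +_{\mathbb H}, *)  \ar@{^{(}->}[r]^{e_2}  \ar[dr]^{g^{\sharp}}
 &   ({\mathbf{F}( \mathbb H)}, +_{_F}, \cdot_{_F})  \ar[d]^{g^{\nabla}}  \\
 &  &  (F, +_{_{Field}}, \cdot_{_{Field}})  }
 \end{xy}
\]
commutes. For uniqueness, any field homomorphism $h$ with $h\circ e_2=g^{\sharp}$ must satisfy, for $q\neq 0_{\mathbf{R}(\mathbb H)}$, the chain $h([p/q]_{/\cong3}) = h\bigl(e_2(p)\bigr)\cdot_{_{Field}} h\bigl(e_2(q)\bigr)^{-1} = g^{\sharp}(p)\bigl(g^{\sharp}(q)\bigr)^{-1}$, because $[p/q]_{/\cong3}=e_2(p)\cdot_{_F}\bigl(e_2(q)\bigr)^{-1}$ in the field $\mathbf{F}(\mathbb H)$; hence $h=g^{\nabla}$.

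I expect the main obstacle to be exactly the well-definedness step, i.e.\ making sure the naive formula survives all of $\cong$, the formal quotient structure of $\mathrm{Quot}$, and $\cong3$ simultaneously; this is where the hypothesis that $g^{\sharp}$ is injective is genuinely used, since otherwise $g^{\sharp}(q)$ could fail to be invertible and the formula would be meaningless. Everything else is the standard "localization is universal" bookkeeping carried over to the present formal setting, and I would only spell out one representative case (say left distributivity or preservation of $\cdot_{_F}$) and leave the parallel cases to the reader.
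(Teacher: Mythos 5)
Your proposal is correct and takes essentially the same route as the paper: define $g^{\nabla}\left(\left[p/q\right]_{/\cong3}\right) := g^{\sharp}(p)\cdot_{_{Field}}\bigl(g^{\sharp}(q)\bigr)^{-1}$, use the injectivity of $g^{\sharp}$ to guarantee the denominator is nonzero, check invariance under the generator of $\cong3$ by cancelling $g^{\sharp}(r)$, and obtain uniqueness from the commuting diagram --- in fact you spell out the homomorphism and uniqueness verifications in more detail than the paper does. The only point the paper treats that you pass over silently is a preliminary lemma showing that injectivity of $g^{\sharp}$ together with commutativity of $(F,\cdot_{_{Field}})$ forces $\mathbb H$ (hence $\mathbf{R}(\mathbb H)$ and $\mathbf{F}(\mathbb H)$) to be commutative, which is what licenses regarding $\mathbf{F}(\mathbb H)$ as a field in the first place.
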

  The following lemma is needed to define an embedding from the ring $ ({\mathbf{R}(\mathbb H)} , +_{\mathbb H}, *) $   into the field $ ({\mathbf{F}( \mathbb H)}, +_{_F} , \cdot_{_F}) $.
  Note that we just have built the field of fractions $ {\mathbf{F}( \mathbb H)} $ of the ring 
  $ {\mathbf{R}( \mathbb H)} $. 
 %  Note that we call an element $ 0_{\mathbb H} \in \mathbb H $ a `zero element' of $ \mathbb H $
 %  (if such an element exists)  if and only if it holds \ 
 %   $ h \cdot  0_{\mathbb H} =   0_{\mathbb H} =  0_{\mathbb H} \cdot h $ for all $ h \in \mathbb H $.       
     \begin{lemma}
        With the above conditions in this theorem % \eqref{eindeutiger Koerperhomomorphismus} 
        we get that the semigroup $ ( \mathbb H, \cdot) $ is commutative, and it follows that the operations in 
        the pairs $ ({\mathbf{R}(\mathbb H)} , *) $ and  $ ({\mathbf{F}(\mathbb H)} , \cdot_{_F} ) $ 
        are also commutative. As a consequence we are able to construct the field 
        $ ({\mathbf{F}( \mathbb H)}, +_{_F} , \cdot_{_F}) $.
     \end{lemma}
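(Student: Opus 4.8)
The plan is to use the injectivity of $g^{\sharp}$ to pull the commutativity of the field multiplication $\cdot_{_{Field}}$ back onto the generating semigroup $\mathbb H$, and then to let the structural results already established in the paper propagate commutativity up to $\mathbf{R}(\mathbb H)$ and $\mathbf{F}(\mathbb H)$.

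First I would record that $e_1 : \mathbb H \hookrightarrow \mathbf{R}(\mathbb H)$ is an embedding, hence injective, and that by the hypothesis of Theorem~\ref{eindeutiger Koerperhomomorphismus} the ring homomorphism $g^{\sharp}$ is injective; since $g = g^{\sharp}\circ e_1$ by Theorem~\ref{eindeutiger Ringhomomorphismus}, the semigroup homomorphism $g:(\mathbb H,\cdot)\to(F,\cdot_{_{Field}})$ is injective as well. Then, for arbitrary $a,b\in\mathbb H$, using that $g$ is a semigroup homomorphism and that multiplication in the field $F$ is commutative, I would compute
\[
  g(a\cdot b)\;=\;g(a)\cdot_{_{Field}}g(b)\;=\;g(b)\cdot_{_{Field}}g(a)\;=\;g(b\cdot a).
\]
Injectivity of $g$ now forces $a\cdot b=b\cdot a$, so $(\mathbb H,\cdot)$ is commutative. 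This is the substantive part of the argument.

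Everything after that is bookkeeping with earlier statements. By Remark~\ref{erste Bemerkung} the commutativity of $(\mathbb H,\cdot)$ is equivalent to the commutativity of $(\mathbf{R}(\mathbb H),*)$, so $*$ is commutative. The earlier lemma asserting that, for a commutative semigroup $(\mathbb H,\cdot)$, both operations of $(\mathrm{Quot},+_{\mathrm{Quot}},\cdot_{\mathrm{Quot}})$ are commutative then gives commutativity of $\cdot_{\mathrm{Quot}}$, and since $\cdot_{_F}$ is by definition the canonical continuation of $\cdot_{\mathrm{Quot}}$ to the quotient $\mathbf{F}(\mathbb H)=\mathrm{Quot}_{/\cong3}$, the operation $\cdot_{_F}$ is commutative too. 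Finally, Proposition~\ref{noch eine proposition} states precisely that when $(\mathbb H,\cdot)$ is commutative the triple $(\mathbf{F}(\mathbb H),+_{_F},\cdot_{_F})$ is a field, which is the last assertion of the lemma.

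I do not expect a genuine obstacle here; the only step that needs a moment's care is the first one, where one must actually invoke the injectivity hypothesis on $g^{\sharp}$ (together with the injectivity of $e_1$) to obtain injectivity of $g$ — without it the identity $g(a\cdot b)=g(b\cdot a)$ would say nothing about whether $a\cdot b=b\cdot a$ holds in $\mathbb H$, and the whole argument would collapse.
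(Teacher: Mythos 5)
Your proposal is correct and follows essentially the same route as the paper: pull back the commutativity of $\cdot_{_{Field}}$ along $g=g^{\sharp}\circ e_1$ using the injectivity hypothesis to get $a\cdot b=b\cdot a$ in $\mathbb H$, then cite Remark~\ref{erste Bemerkung} and the construction of $\mathbf{F}(\mathbb H)$ (Proposition~\ref{noch eine proposition}) for the remaining claims. Your explicit observation that injectivity of $e_1$ is also needed to deduce injectivity of $g$ is a small but welcome sharpening of the paper's wording.
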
   
     \begin{proof}
   %     Let  $ a \cdot b \neq b \cdot a $. 
        We consider two elements $ g(a \cdot b) $ and $ g(b \cdot a) $ for $ a,b \in \mathbb H $. We have 
        $ g(b \cdot a)= g(b) \, \cdot_{_{Field}} g(a) = g(a) \, \cdot_{_{Field}} g(b) =  g(a \cdot b) $, since 
        $(F, +_{_{Field}}, \cdot_{_{Field}} ) $ \, is a field, i.e. $(F, \cdot_{_{Field}} ) $ is commutative. 
        Because of $ g = g^{\sharp} \circ e_1 $ and $ g^{\sharp} $ is injective it follows $ a \cdot b = b \cdot a $. 
   %     Hence the semigroup $ \mathbb H $ is commutative. 
   %     we get a contradiction to the injectivity of  $ g^{\sharp} $. 
        Please see Remark \eqref{erste Bemerkung}. The pair 
        $ ( {\mathbf{R}(\mathbb H)} , *)$  is associative and commutative.   This holds for 
        $ ( {\mathbf{F}(\mathbb H)} , \cdot_{_F} )$, too, due to the construction of
        $ {\mathbf{F}(\mathbb H)} $. Please see Proposition \eqref{noch eine proposition}. 
  %      Note that the zero element $ 0_{\mathbb H} $ (if such an element exists in $ \mathbb H $)
  %      is maped by  $ g $ onto the zero element of $ (F, \cdot_{_{Field}}) $, which we call 
  %      $ 0_{ \mathbf{F}(\mathbb H)} $. 
  %      If we assume $ b \neq c $ it would contradict the injectivity of 
  %      $ g^{\sharp} $, too. This holds for all $ a,b,c \in \mathbb H$ and $ a\neq 0_{\mathbb H} $. 
  %      \  ( $ a\neq 0_{\mathbb H} $ ???)
    \end{proof}  
   \begin{proof}  of Theorem \ref{eindeutiger Koerperhomomorphismus}.    \\         
         Since the tripel  $(F, +_{_{Field}}, \cdot_{_{Field}} ) $ is a field, it is also a ring. 
          By  Theorem \ref{eindeutiger Ringhomomorphismus} there exists a unique ring homomorphism $ g^{\sharp} $
          such that the following diagram commutes, i.e.   $ g = g^{\sharp} \circ e_1 $.
  %        Please see the next diagram. 
       \\ $$   
             \begin{xy}        \xymatrix{
             &   ( {\mathbb H}, \cdot )     \ar@{^{(}->}[rr]^{e_1}          \ar[drr]_{g}              
       &      &   ({\mathbf{R}(\mathbb H)} , +_{\mathbb H}, *)    \ar[d]_{g^{\sharp}}                       \\
  %          &    ({\mathbf{F}( \mathbb H)}, +_{_F}, \cdot_{_F})                                           \\
       &      &                 &     (F, +_{_{Field}}, \cdot_{_{Field}})  &         }
             \end{xy}     
         $$    
 %%%%%%%%%%%%%%%%%%%%%%%%%%%%%%%%%%%%%%%%%%%%%%%%%%%%%%%%%%%%%%%%%%%%%%%%%%%%%%%%%%%%%%%%%%%%%%%%%
           Let  $ 0_{_{Field}} ,  1_{_{Field}} $ 
           be the neutral elements in $ (F, +_{_{Field}} ) $ and $ (F, \cdot_{_{Field}} ) $, respectively.   \\
           The map $ g^{\sharp} $ is injective, i.e. the kernel of $ g^{\sharp} $ is  
           $ \{ 0_{_{Field}} \}  $.    \\ 
  %    For arbitrary \
  %    $ p :=  \left\langle \pm_{1,1} \, p_1 \pm_{2,1} \, p_2 \pm_{3,1} \, p_3 \, .... \, 
  %                   \pm_{N-1,1} \, p_{N-1} \pm_{N,1} \, p_N \right\rangle $ \ \text{ and }   \\
  %    $  y :=  \left\langle \pm_{1,2} \, y_1 \pm_{2,2} \, y_2 \pm_{3,2} \, y_3 \, .... \, 
  %              \pm_{N-1,2} \, y_{N-1} \pm_{N,2} \, y_N \right\rangle , \ \ \ p, y \in  sums ({\mathbb H}) $   \\  \\ 
      \quad {\text {We define }}  \ \ $ g^{\nabla} : ({\mathbf{F}( \mathbb H)}, +_{_F}, \cdot_{_F} ) \longrightarrow
                                                   (F, +_{_{Field}} , \cdot_{_{Field}}) $.    \\  
       For arbitrary \
      $ p :=  \left\langle \pm_{1,1} \, p_1 \pm_{2,1} \, p_2 \pm_{3,1} \, p_3 \, \ldots \, 
                     \pm_{N-1,1} \, p_{N-1} \pm_{N,1} \, p_N \right\rangle $ \ \text{ and }   \\
      $ y :=  \left\langle \pm_{1,2} \, y_1 \pm_{2,2} \, y_2 \pm_{3,2} \, y_3 \, \ldots \,     
                \pm_{M-1,2} \, y_{M-1} \pm_{M,2} \, y_M \right\rangle , \ \ \ 
                p, y \in  sums ({\mathbb H}) , \ [ y ]_{/ \cong} \neq 0_{\mathbf{R}(\mathbb H)} $               \\
      \qquad {\text {we define }} % \ \ \qquad 
    \begin{align}
      &     g^{\nabla} ( 0_{\mathbf{F}(\mathbb H)} ) :=  0_{_{Field}} \ \text{ and } \ 
            g^{\nabla} ( 1_{\mathbf{F}(\mathbb H)} ) :=  1_{_{Field}} \ \text{ and} \ 
            g^{\nabla} \left( \frac{[p]_{/ \cong}}{ 1_{\mathbf{F}(\mathbb H)} } \right) \ :=   
            g^{\sharp} \left( [p]_{/ \cong} \right) \ := \\ 
    % \frac{ g(p)}{1_{_K}} \ := \      \\
      &     \frac{\pm_{1,1} \, g(p_1) \pm_{2,1} \, g(p_2) \pm_{3,1} \, g(p_3) \, \ldots \, 
                                       \pm_{N-1,1} \, g(p_{N-1}) \pm_{N,1} \, g(p_N) }{ 1_{_{Field}} } \ ,       
     \end{align}                                   
           \text{ as well as for }  \ \ \  $ w :=  \frac{p}{y} $ 
   %   =  \frac{ p }{ \left\langle  \pm_{1,2} \, y_1 \pm_{2,2} \, y_2 \pm_{3,2} \, y_3 \, .... 
   %                          \pm_{M-1,2} \,  y_{M-1} \pm_{M,2} \, y_M \right\rangle } \ \in \mathbf{F}(\mathbb H)
   %    \ , \ \text{if} \ \ \frac{p}{y} \neq \frac{p * r}{y * r} \ \text{ for each} \ \ r \in  sums ({\mathbb H}) ,
   %     r \neq \left\langle \right\rangle  
       \ \      \text{ we define } 
    \begin{align}   
      &      g^{\nabla} \left( [w]_{/ \cong 3} \right) \, := \, 
                     \frac{ g^{\sharp}([p]_{/ \cong})} { g^{\sharp}([y]_{/ \cong})} \ = \
                     \frac{\pm_{1,1} \, g(p_1) \pm_{2,1} \, g(p_2) \pm_{3,1} \, g(p_3) \, \ldots \, 
                     \pm_{N-1,1} \, g(p_{N-1}) \pm_{N,1} \, g(p_N) }
                     {\pm_{1,2} \, g(y_1) \pm_{2,2} \, g(y_2) \pm_{3,2} \, g(y_3) \, \ldots \, 
                     \pm_{M-1,2} \, g(y_{M-1}) \, \pm_{M,2} \, g(y_M)} \ \ .  %   i.e. \  \\
  %    &    0_{K}  \neq \pm_{1,2} \, g(y_1) \pm_{2,2} \, g(y_2) \pm_{3,2} \, g(y_3) \, .... \, 
  %                               \pm_{M-1,2} \,  g(y_{M-1}) \pm_{M,2} \, g(y_M)      \ ,               \\               %    &          \text{and \ }                0_{\mathbf R(H)})  \neq
  %    \left\langle  \pm_{1,2} y_1 \pm_{2,2} y_2 \pm_{3,2} y_3 \, .... \, \pm_{k-1,2}  y_{k-1} \pm_{k,2} y_k
  %                                      \right\rangle     \                      .  
    \end{align}   
      The condition that $ g^{\sharp} $ is injective ensures that the denominator $  g^{\sharp}([y]_{/ \cong}) $
      of \, $ g^{\nabla} ([w]_{/ \cong 3}) $ \, is not zero.  
      Note that if \ $ \frac{p}{y} =  \frac{r * a}{r * b} $, we have  
     \begin{align} 
      g^{\nabla} \left( \left[w\right]_{/ \cong 3} \right)  
      \ = \  \frac{ g^{\sharp}([p]_{ \cong}) }{ g^{\sharp}([y]_{ \cong})}  
  %         g^{\nabla} \left( \left[ \frac{r * a}{r * b} \right]_{/ \cong 3} \right) = 
      \ = \ \frac{  g^{\sharp}([r * a]_{ \cong})}{ g^{\sharp}( [r * b]_{ \cong})}    \ = \
         \frac{ g^{\sharp}([r]_{ \cong}) \cdot_{_{Field}} g^{\sharp}([a]_{ \cong})}
               { g^{\sharp}([r]_{ \cong}) \cdot_{_{Field}} g^{\sharp}([b]_{ \cong})} \ = \ 
  %            \left[ \frac{ g^{\sharp}([r]_{ \cong}) *  g^{\sharp}([a]_{ \cong})}{ g^{\sharp}(r) *  g^{\sharp}(b)}
  %            \right]_{/ \cong 3} =       
         \frac{  g^{\sharp}([a]_{ \cong})}{  g^{\sharp}([b]_{ \cong})}  \ .
  %       = g^{\nabla} \left( \left[ \frac{p}{y} \right]_{/ \cong 3} \right) \ . 
     \end{align}        
         The values of $ g^{\nabla} $ are determined uniquely by the values of
      $ g $ on $ \mathbb H $.  The uniqueness of \ $ g^{\nabla} $ \, is clear \ since
      %%%%%%%%%%%%%%%%%%%%%%%%%%%%%%%%%%%%%%%%%%%%%%%%%%%%%%%%%%%%%%%%%%%%%%%%%%%%%
           $ g \left( a \right) =  g^{\nabla}  \circ e_2 \circ e_1(a) 
           = g^{\nabla} \circ e_2 \left( \left[ \left\langle +a \right\rangle  \right]_{/ \cong} \right)
           = g^{\nabla} \left( \frac{ \left[ \left\langle +a \right\rangle  \right]_{/ \cong} }
                                                            { 1_{\mathbf{F}(\mathbb H)} } \right)
           = + g \left( a \right) $, for $ a \in  \mathbb H $.  \\
           Please see the following commutative diagram. 
      %%%%%%%%%%%%%%%%%%%%%%%%%%%%%%%%%%%%%%%%%%%%%%%%%%%%%%%%%%%%%%%%%%
    %       $ g \left( a \right) =  g^{\nabla}  \circ e_2 \circ e_1(a) 
    %       = g^{\nabla} \circ e_2 \left( \left[ \left\langle +a\right\rangle \right]_{\cong} \right)
    %        = g^{\nabla} \left( \frac{\left\langle +a\right\rangle}{ 1_{\mathbf{F}(\mathbb H)} } \right)
    %        = + g \left( a \right) $, for $ a \in  \mathbb H $. 
    %         Therefor the map $ g^{\nabla} $ is uniquely determined by the values of $ g $.
  %    The uniqueness of \  $ g^{\nabla} $ \, is easy,    \ar@{^{(}->}[rrr]^{e_2}    
      %%%%%%%%%%%%%%%%%%%%%%%%%%%%%%%%%%%%%%%%%%%%%%%%%%%%%%%%%%%%%%
      \\ $$   
             \begin{xy}        \xymatrix{
             &    ( {\mathbb H}, \cdot )     \ar@{^{(}->}[rr]^{e_1}  \ar[drr]_{g}   
        &     & ({\mathbf{R}(\mathbb H)} , +_{\mathbb H}, *)  \ar@{^{(}->}[r]^{e_2}   \ar[d]_{g^{\sharp}} 
             &     ({\mathbf{F}( \mathbb H)}, +_{_F}, \cdot_{_F})    \ar[d]_{g^{\nabla}}        \\          
   %          &   ({\mathbf{R}(\mathbb H)} , +_{\mathbb H}, *)                        \\
             &  &  &       (F, +_{_{Field}}, \cdot_{_{Field}})    \ar@{^{}=}[r]^{id}  %   \\
             &                 (F, +_{_{Field}}, \cdot_{_{Field}})           }
             \end{xy}     
         $$
      %%%%%%%%%%%%%%%%%%%%%%%%% 
  \end{proof}       
%%%%%%%%%%%%%%%%%%%%%%%%%%%%%%%%%%%%%%%%%%%%%%%%%%%%%%%%%%%%%%%%%%%%%%%%%%%%%%%%%%%%%%%%%%%%%%%%%%%% 
      { \textbf{ Acknowledgements}: }    We thank  Aida Rodriguez for a careful reading of the paper. 
   %    Eberhard Oeljeklaus for hints and suggestions and ......
  %     We wish to thank Prof. Dr. Eberhard  Oeljeklaus 
	% 	  for a careful reading of the paper and some helpful calculations. 
%%%%%%%%%%%%%%%%%%%%%%%%%%%%%%%%%%%%%%%%%%%%%%%%%%%%%%%%%%%%%%%%%%%%%%%%%%%%%%%%%%%%%%%%%%%     
    
%%%%%%%%%%%%%%%%%%%%%%%%%%%%%%%%%%%%%%%%%%%%%%%%%%%%%%%%%%%%%%%%%%%%%%%%%%%%%%%%%%%%%%%%%%%%%%%%%%%%%%%% 
           \newpage
           Author:   \\
               Dr. Volker Wilhelm Th\"urey       \\
               Hegelstrasse 101                  \\
               28201 Bremen, \   Germany         \\
               T: \  49 (0) 421 591777           \\
               E-Mail:  \   volker@thuerey.de 
    
%%%%%%%%%%%%%%%%%%%%%%%%%%%%%%%%%%%%%%%%%%%%%%%%%%%%%%%%%%%%%%%%%%%%%%%%%%%%%%%%%%%%%%%%%%%%%%%%%%%%%    

\begin{thebibliography}{99}     
   %    \bibitem{Alonso} {J. Alonso},  \textit{Some results on Singer orthogonality and characterizations 
   %           of inner product spaces}, Archiv der Mathematik {\mathbf 61} (1993), 177-182   
   %   \bibitem{Boisvert/Clark/Olver/Lozier}  R.F. Boisvert, C.W. Clark, F.W. Olver, D.W. Lozier,  
   %           \textit{NIST Handbook of Mathematical Functions},  Cambridge University Press (2010)   
   %    \bibitem{NIST}  R.F. Boisvert, C.W. Clark, F.W. Olver, D.W. Lozier, 
   %           \textit {NIST Digital Library of Mathematical Functions}, http://dlmf.nist.gov/       
   %    \bibitem{Boyajian} {A. Boyajian}, \textit{Physical Interpretation of Complex Angles and their Functions},
   %            Transactions of the American Institute of Electrical Engineers, Vol. {\textbf 42} (1923), 279-288 
       \bibitem{Bosch} S. Bosch,                    \textit{Algebra},  5. Auflage, Verlag Springer (2004)   
       \bibitem{Hungerford} Thomas W. Hungerford,   \textit{Algebra},              Verlag Springer (1974)  
   %    \bibitem{Bronstein/Semendjajew} I.N. Bronstein, K.A. Semendjajew, 
   %           \textit{Taschenbuch der Mathemetik},  19. Auflage, Harri Deutsch (1980) 
   %    \bibitem{Diminnie/Andalafte/Freese1}   C.R. Diminnie, E.Z. Andalafte and R.W. Freese,   
   %           \textit{ Angles in Normed Linear Spaces and a Characterization of Real Inner Product Spaces},   
   %           Mathematische Nachrichten {\textbf 129} (1986), 197-204
   %    \bibitem{Diminnie/Andalafte/Freese2}   C.R Diminnie, E.Z. Andalafte and R.W. Freese,  
   %           \textit{Generalized Angles and a Characterization of Inner Product Spaces},   
   %           Houston Journal of Mathematics {\textbf 14} (1988), No.4, 457-480  
    %   \bibitem{Froda}  A. Froda, \textit{Sur l'angle complexe, orient$\acute{e}$, 
    %           de deux vecteurs d'un espace unitaire},  Atti della Accademia Nazionale dei Lincei, Rendiconti,
    %           Classe di Scienze Fisiche, Matematiche e Naturali, 8. Ser. {\textbf 30} (1961), 845-853          
    %   \bibitem{Gago-Ribas/Morales}   E. Gago-Ribas, M.J. $ \rm Gonz\acute{a}lez $ Morales,   
    %          \textit{2D Complex Point Source Radiation Problem \\ I. Complex Distances and Complex Angles},
    %          Turkish Journal of Electrical Engineering and Computer Sciences,  {\textbf 10} (2002), 317-343      
    %   \bibitem{Gajado-Silva/Landesa}   G.Gajardo-Silva, L. Landesa,   
    %          \textit{The Synthesis of Complex-Angle Zeros for On-board Antenna Arrays}, \
    %          Progress In Electromagnetics Research, PIER {\textbf 80} (2008), 369-380,
    %          http://www.jpier.org/PIER/pier80/20.07120303.pdf               
    %   \bibitem{Galantai/Hegedues}   A Gal$\rm \acute{a}$ntai, C.J. Heged\"us,   
    %          \textit{Jordan's Principal Angles in Complex Vector Spaces},
    %          Numerical Linear Algebra with Applications,    {\textbf 13} (2006), 589-598        
    %   \bibitem{Gunawan} H. Gunawan, J. Lindiarni and O. Neswan, 
    %          \textit{P-, I-, g-, and D-Angles in Normed Spaces},       
    %          Institute of Technology Bandung(Indonesia), J. Sci. {\textbf 40} A, No.1 (2008), 24-32 
    %   \bibitem{NIST} \textit {NIST Digital Library of Mathematical Functions},
    %         http://dlmf.nist.gov/  
    %   \bibitem{Rudin} Walter Rudin, \textit{Functional Analysis}, 2. Edition, McGraw-Hill (1991)          
    %   \bibitem{Scharnhorst} \rm Klaus Scharnhorst, \textit {Angles in Complex Vector Spaces},
    %         Acta Applicandae Mathematicae, {\textbf 69} (2001), 95-103  
    %   \bibitem{Sherman} \rm Samuel M. Sherman, \textit {Complex Indicated Angles Applied to Unresolved Radar Targets
    %         and Multipath}, IEEE Transactions on Aerospace and Electronic Systems, Vol. AES-7, No 1, 160-170
    %         (January 1971), \  http://virtual2002.tau.ac.il/users/www/6917/sherman.pdf                 
    %   \bibitem{Singer}  Ivan Singer, 
    %         \textit{Unghiuri Abstracte \c si Func\c tii Trigonometrice $\it \hat{i}$n Spa\c tii  Banach}, 
    %          Buletin  \c Stiin\c tific,  Sec\c tia  de  \c Stiin\c te  Matematice  \c si  Fizice, 
    %          Academia  Republicii  Populare   Rom$ \rm \hat{i}$ne {\bf 9} (1957), 29-42  
    %   \bibitem{Thuerey1} \rm Volker W. Th\"urey, \textit {Angles and Polar Coordinates in Real Normed
    %         Spaces}, arXiv.org, (2009), http://arxiv.org/abs/0902.2731  
    %   \bibitem{Thuerey2} \rm Volker W. Th\"urey, \textit {A Generalization of the Euclidean Angle},
    %         Journal of Convex  Analysis, Volume 20, Number 4 (2013), 1025-1042                
    %   \bibitem{Thuerey3} \rm Volker W. Th\"urey, \textit {Angles and a Classification of Normed Spaces},
    %         Annals of Functional Analysis, Volume 4, Issue 1 (2013), 114-137    
    %   \bibitem{Thuerey4} \rm Volker W. Th\"urey, \textit {The Complex Angle in Normed Spaces}, 
    %           Revue  Roumaine  de $\rm Math\acute{e}matiques$  Pures et $\rm Appliqu\acute{e}es$, Volume 60,
    %           Number 2 (2015), 177-197 
    %                        REVUE ROUMAINE DE MATH�MATIQUES PURES ET APPLIQU�ES 
    %                        \bibitem{J.E.Valentine/C.Martin}  J.E. Valentine and C. Martin,  
    %         \textit{Angles in Metric and Normed Linear Spaces}, Colloquium Mathematicum {\bf 34} (1976), 209-217   
    %  \bibitem{J.E.Valentine/S.G.Wayment}  J.E. Valentine and S.G. Wayment,  
    %         \textit{Wilson Angles in Linear Normed Spaces},  Pacific Journal of Mathematics {\bf 36}, 
    %         No 1 (1971), 239-243 
    %   \bibitem{Werner} D. Werner, \textit{Funktionalanalysis}, 7. Auflage, Springer (2011)   
    %   \bibitem{Wikipedia} German Wikepedia: $ http://de.wikipedia.org/wiki/Arkussinus \: und \: Arkuskosinus $    
  \end{thebibliography}
\end{document}